\newcommand{\X}{{\bf X}}
\newcommand{\bc}{{\bf c}}
\newcommand{\R}{\mathbb{R}}
\newcommand{\Q}{\mathbb{Q}}
\newcommand{\C}{\mathbb{C}}
\newcommand{\Z}{\mathbb{Z}}
\newcommand{\F}{\mathbb{F}}
\newcommand{\lang}{\langle}
\newcommand{\rang}{\rangle}
\newcommand{\x}{{\bf x}}
\newcommand{\bs}{{\bf s}}
\newcommand{\bm}{{\bf m}}
\newcommand{\leg}[2]{{#1\overwithdelims () #2}}
\DeclareMathOperator{\ord}{ord}
\newtheorem{thm}{Theorem}[section]
\newtheorem{lem}[thm]{Lemma}
\newtheorem*{lem*}{Lemma}
\newtheorem{prop}[thm]{Proposition}
\theoremstyle{remark}
\newtheorem*{rem*}{Remark}
\theoremstyle{definition}
\date{\today}
\author{Holley Friedlander}
\address{Dickinson College}
\email{friedlah@dickinson.edu}
\subjclass[2010]{Primary 11F68, 11M41; Secondary 11R58, 11M41}
\keywords{Weyl group multiple Dirichlet series, rational function field}
\title[multiple Dirichlet series over the rational function field]{Twisted Weyl group multiple Dirichlet series over the rational function field}
\begin{document}

\begin{abstract}
Weyl group multiple Dirichlet series are Dirichlet series in $r$ complex variables, with analytic continuation to $\C^r$ and a group of functional equations isomorphic to the Weyl group of a reduced root system of rank $r$. Such series may be defined for any global field $K$, but in the case when $K$ is an algebraic function field they are expected to be, up to a variable change, rational functions in several variables.  We verify the rationality of these functions in the case when $K=\F_q(T)$, and describe the denominators and support of the numerators.

\end{abstract}

\maketitle


\section{Introduction}
\label{sec:introduction}
\subsection{Overview}
To $n$ a positive integer, $K$ a global field containing the $2n$th roots of unity, and $\Phi$ a reduced root system of rank $r$, one may associate a Weyl group multiple Dirichlet series. These objects are Dirichlet series in $r$ complex variables, with analytic continuation to $\C^r$ and a group of functional equations isomorphic to the Weyl group of $\Phi$. They arise, for example, as Fourier-Whittaker coefficients of Eisenstien series on metaplectic groups and have applications in analytic number theory \cite{C,hoffstein,patterson,GH,mdssurvey}.   

In this paper we consider the case when $K=\F_q(T)$ is the field of rational functions over the finite field $\F_q$ with $q$ elements. In this case --- in fact, if $K$ is any algebraic function field --- Weyl group multiple Dirichlet series are expected to be rational functions in several variables. The first one to note the rationality of these series was Hoffstein, who in \cite{hoffsteina2} obtained a Weyl group multiple Dirichlet series of type $\Phi=A_2$ as the Mellin transform of a half-integral weight Eisenstein series. In \cite{FF}, Fisher and Friedberg also obtain an analytic continuation for and prove the rationality of type $A_2$ series (and provide explicit examples for $K$ the rational function field {\em and} the function field associated to a specific genus one curve --- for similar $A_3$ examples, see \cite{FF2}; for more $A_2$ and $K$ genus one, see \cite{ians}), but with a different construction that avoids the use of metaplectic Eisenstien series. Our methods are more in line with the latter construction. However, in the time since that publication, a more cohesive theory of Weyl group multiple Dirichlet series has developed, cf.~\cite{wmds1,stable,wmds3,wmds4,wmnds}. For $A_2$ and $A_3$ examples illustrative of this newer approach, see \cite{C}; for $A_r$ and $n \gg r$, see \cite{chintamohler}. 

Our work expands on the previous literature in two ways. First, we work with $\Phi$ and $n$ arbitrary. Second, we consider ``twisted'' Weyl group multiple Dirichlet series over the rational function field. Twisted series are essentially twists of the original series by $n$th order characters. In this sense, one may think of twisted Weyl group multiple Dirichlet series as analogue to $L$-functions. 
Just as in the case of the zeta function associated to an algebraic function field, we expect the coefficients of these twisted series to encode information about the arithmetic of the defining curve. The first step is to determine the support of the series, we which we address in Theorem \ref{thm:localglobal}. Specifically, we prove that such series are in fact rational functions and show that after a variable change, Weyl group multiple Dirichlet series over the rational function field can be expressed as finite weighted sums of ``local'' series, which act analogue to Euler factors in the construction of the global object. This result generalizes observations from \cite{chintamohler,C} that note a similarity between the local and global series in certain special cases. It also provides a roadmap to compute families of examples. 

%
%

\subsection{Series construction}
To better understand Theorem \ref{thm:localglobal}, it is helpful to have an idea of how one constructs Weyl group multiple Dirichlet series in the rational function field case. We give a brief overview here, with full details appearing in Section \ref{sec:prelim}. Let $n \geq 1$ be an integer such that $q \equiv 1 \mod{2n}$. Let $\mathcal{O}=\F_q[T]$ and let $\mathcal{O}_{mon}\subset\mathcal{O}$ denote the set of monic polynomials. For $f \in \mathcal{O}$, the norm of $f$ is $|f|:=q^{\deg f}$. Choose $r$-tuples ${\bf s}=(s_1,\ldots, s_r)$ of complex variables and ${\bf m}=(m_1,\ldots,m_r)$ of elements in $\mathcal{O}^\times$. We call $\bm$ a twisting parameter. When $\bm=(1,\ldots,1)$, we say that the series is untwisted. Now define the degree $n$ Weyl group multiple Dirichlet series of type $\Phi$ with twisting parameter $\bm$ by
\begin{equation}
\label{eqn:zdef1}
Z({\bf s};{\bf m},\Phi,n,\F_q(T))=Z({\bf s};{\bf m}):=\sum_{{\bf c}\in (\mathcal{O}_{mon})^r} \frac{H({\bf c};{\bf m})}{|c_1|^{s_1}\cdots |c_r|^{s_r}}.
\end{equation} 

For fixed $p \in \mathcal{O}_{mon}$ irreducible and $\ell=(l_1,\ldots,l_r) \in (\Z_{\geq 0})^r$, the $p$-part of $Z({\bf s};{\bf m})$ is a generating function in $|p|^{-s_1},\ldots, |p|^{-s_r}$ for the $p$-power coefficients $H(p^{k_1},\ldots,p^{k_r};p^{l_1},\ldots,p^{l_r})$. Although $Z(\bs;\bm)$ is {\em not} Eulerian in general, the $p$-parts completely determine the coefficients $H(c_1,\ldots,c_r;{\bf m})$ via a twisted multiplicativity relation that acts analogue to an Euler product (see Section \ref{subsec:gsc}). 

The $p$-parts are built out of Gauss sums using combinatorial data from $\Phi$. We will follow the Chinta--Gunnells construction (see Section \ref{sec:cgconstruction}), which defines the $p$-parts via an averaging technique analogous to the Weyl character formula. This method yields global series with the desired analytic properties of analytic continuation and Weyl group of functional equations for all $\Phi$ and $n$ \cite{wmnds}. There are other methods to define the $p$-parts, notably the crystal graph technique of Brubaker, Bump, and Friedberg \cite{wmds1,wmds2,wmds4,typea}. Equivalence of the Brubaker--Bump--Friedberg and Chinta--Gunnells constructions has been shown in several cases \cite{ppart,me2,mcnamara,chintaoffen}.

As previously mentioned, the series $Z({\bf s};\bm)$ are expected to be rational in $q^{-s_1}, \ldots, q^{-s_r}.$ It was noticed in \cite{chintamohler,C} that in the untwisted case when $\Phi$ is type $A$ and $n \gg r$, up to a rational factor, a simple change of variables transforms the $p$-parts into the global series. This correspondence parallels the relationship between the Euler factors of the zeta function associated to the projective line and the global series. Indeed, Chinta and Gunnells \cite{wmnds} use this correspondence as a basis for their combinatorial construction of the $p$-parts. 
Our results generalize previous case-by-case observations: first, Theorem \ref{thm:rat} proves that the global series are indeed rational functions. Proposition \ref{prop:localglobal} then shows that in the untwisted case, a simple change of variables transforms a modified $p$-part into the global series, and Theorem \ref{thm:localglobal} shows that in the twisted case, this same variable change allows us to express the the global series as a finite weighted sum of modified $p$-parts. (The modified $p$-parts are simply $p$-parts multiplied by a prefixed rational factor.) Moreover, after normalizing by a product of zeta functions, the weights are coefficients of the original series $Z(\bs;\bm)$.

This paper is organized as follows. In Section \ref{sec:prelim} we introduce relevant notation and review the Chinta--Gunnells construction of the $p$-parts and their combinatorial properties \cite{ppart,me2,wmnds}. In Section \ref{sec:GFE} we use the functional equations stated in \cite{wmnds} for $K$ any number field to derive explicit functional equations for the case when $K=\F_q(T)$. In Section \ref{sec:global}, we prove the rationality of the series and describe the relationship between the $p$-parts and the global series in detail. In particular, Propositions \ref{prop:localglobal} and Theorem \ref{thm:localglobal} completely characterize Weyl group multiple Dirichlet series associated to the rational function field in terms of local $p$-parts. We also provide two low-rank examples.

\subsection*{Acknowledgements}
Thank you to Paul Gunnells, who advised the thesis that led to this paper, for his support and encouragement over several years of work on this project. Thanks also to Gautam Chinta for several very helpful conversations. I also had extremely valuable discussions with Ben Brubaker, Solomon Friedberg, Daniel Bump, Jeff Hoffstein, Anna Pusk\'{a}s, Ian Whitehead, and TingFang Lee; thank you.


  \section{Preliminaries}
    \label{sec:prelim}

  Fix an integer $n\geq 1$ and let $q=p^k$, for $p$ a rational prime and $k$ a positive integer, be such that $q \equiv 1 \mod 2n$. We consider $K=\F_q(T)$ the field of rational functions in $T$ over the finite field $\F_q$ with $q$ elements,  where $\mathcal{O}=\F_q[T]$ denotes the associated polynomial ring and $\mathcal{O}_{mon}\subset \mathcal{O}$ the subset of monic polynomials.  The completion of $K$ at the place corresponding to $\pi_\infty=T^{-1}$ is the field $K_\infty=\F_q((T^{-1}))$ of Laurent series in $\pi_\infty$. We have $\mathcal{O} \subset K \subset K_\infty$. For $f=\sum_{i=-k}^\infty a_i\pi_\infty^i \in K_\infty$, the degree of $f$ is the smallest $i$ such that $a_i \neq 0$, and the
norm $|\cdot|: \mathcal{O} \to \C^\times$ is defined by $|f|:=q^{\deg f}$.  For the remainder of the text, we assume that $P$ is a prime element of $K$, i.e.~$P\in \mathcal{O}_{mon}$ is irreducible.  
     
  \subsection{Gauss sums}
      The coefficients of Weyl group multiple Dirichlet series involve $n$th order Gauss sums. Before defining these generalized Gauss sums, we first recall the traditional finite field Gauss sum definition. Let $\mu_n=\{a \in \F_q:a^n=1\}$ be the $n$th roots of unity in $\F_q^\times$, and fix an embedding $\epsilon:\mu_n \to \C$.
Consider the multiplicative character $\chi:\F_q^\times \to \mu_n$ defined by $a \mapsto a^{(q-1)/n}$ and an additive character $e_0:\F_p^\times \to \C$, which we will take as $a\mapsto \exp{2\pi i a/p}$. To extend $e_0$ to $\F_q$, put $e_\ast=e_0\circ Tr_{\F_q/\F_p}$. For $t \in \Z$, define the Gauss sum
    \begin{equation}
    \label{eqn:gausssum}
    \tau(\epsilon^t)=\sum_{a\in \F_q^\times}\epsilon(\chi(a))^te_\ast(a).
    \end{equation}
A detailed listing of the properties of these sums see can be found in \cite[Section 8.2]{irelandrosen}. In particular, one can show that $\tau(\epsilon^t)\tau(\epsilon^{-t})=q.$
     
We now extend the notion of Gauss sum to $\mathcal{O}$. As we will no longer need to refer explicitly to the characteristic of $K$, from now on we let $p:=|P|$. For $a \in K_\infty$, write $a=\sum_{i\geq-N} a_{i}\pi_\infty^i$ and let $\psi(a)=a_{-1}$ and $\psi^\ast(a)=\psi(T^2a)$. Define a global additive character $e=e_\ast\circ\psi^\ast$, where $e_\ast$ is the additive character on $\F_q$ defined above. One can show that $e$ satisfies 
    \(
    \mathcal{O}=\{x \in K_\infty: e(x\mathcal{O})=1\}.
    \) 
    Let $m,c \in \mathcal{O}$. For $t \in \Z$, define the generalized Gauss sum
    \begin{equation}
    \label{eqn:globgauss}
    g(m,c;\epsilon^t)=g_t(m,c):=\sum_{y \not\equiv 0 \mod c} \epsilon\Bigg(\leg{y}{c}_n\Bigg)^te(my/c),
    \end{equation}
    where $\displaystyle{\leg{y}{c}_n}$ is the $n$th order residue symbol for $K$. We note that for $a,b \in \mathcal{O}_{mon},$  the $n$th order reciprocity law, cf.~\cite[Theorem 3.5]{rosen} and our assumption $q \equiv 1 \mod {2n}$ imply 
    \[
    \leg{a}{b}_n=\leg{b}{a}_n
    .\]
    
 After an appropriate identification of the residue field $\left(\mathcal{O}/c\mathcal{O}\right)^\times$ of $c$ with the finite field $\F_{q^{\deg c}}$, one sees that $g_t(1,c)$ corresponds to the $\F_{q^{\deg c}}$ Gauss sum $\tau(\epsilon^t)$.
    Accordingly, the following properties of generalized Gauss sums follow directly from the corresponding properties of finite field Gauss sums \cite{Kubota, hoffstein, C}, cf.~\cite[Section 8.2]{irelandrosen}.
    \begin{enumerate}
    \item If $(c,c')=1$, we have 
    \[
    g_t(m,cc')=\leg{c}{c'}_n^t\leg{c'}{c}_n^tg_t(m,c)g_t(m,c').
    \]
    \item If $(a,c)=1$, we have
    \[
    g_t(am,c)=\leg{a}{c}_n^{-t}g_t(m,c).
    \]
    \item Let $k,l \in \Z_{\geq 0}$, and let $\phi(P^l)=$ be the number of elements of $(\mathcal{O}/P^l\mathcal{O})^\times$. Then
    \begin{equation}
    \label{eqn:gauss}
    g_t(P^l,P^k)=\left\{\begin{array}{cl}p^lg_{tk}(1,P)&\mbox{if } k=l+1;\\
    \phi(P^k)&\mbox{if } n|tk \mbox{ and } l \geq k;\\
    0&\mbox{otherwise}.\end{array}\right.
    \end{equation}
    \item If $(t,n)=1$, then 
    \[
    g_t(1,P)g_{-t}(1,P)=|P|=p.
    \]
    \end{enumerate}
    In what follows, we denote $g_t(1,c)$ by $g_t(c)$.

%

\subsection{Roots and weights}
The $p$-parts are built using combinatorial data from an irreducible, reduced root system $\Phi$ of rank $r$, see \cite[Chapter 9]{humphreys}. Any root of $\Phi$ can be written as a $\Z$-linear combination of the simple roots $\alpha_1,\ldots, \alpha_r$. We say $\alpha \in \Phi$ is positive (negative) if when written as a sum of simple roots $\alpha=\sum k_i \alpha_i$, all $k_i$ are nonnegative (nonpositive). We have a decomposition $\Phi=\Phi^+\cup\Phi^-$ into positive and negative roots.  If $\alpha \in \Phi^+$, we will write $\alpha>0$. Let $\Lambda$ be the root lattice of $\Phi$, i.e.~the $\Z$-span of the simple roots. Define the generalized height function $d:\Lambda \to \Z$ by
    \begin{equation}
    \label{eqn:height}
    d:\lambda=\sum_{i=1}^r \lambda_i\alpha_i \mapsto \sum \lambda_i.
    \end{equation}
    
    Let $W$ be the Weyl group of $\Phi$, and fix a $W$-invariant symmetric, bilinear, positive definite inner product $(\cdot,\cdot)$ on $\Lambda \otimes \R$ normalized such that the short roots all have length one. This implies that for any $\alpha,\beta \in \Lambda$, we have $(\alpha,\beta) \in \frac{1}{2}\Z$. In particular 
    \[
    \|\alpha\|^2=\left\{\begin{array}{ll} 1&\mbox{for all $\alpha$ in types } A,D,E,\\
    1&\mbox{for $\alpha$ a short root in types } B,C,F_4,G_2,\\
    2&\mbox{for $\alpha$ a long root in types }B,C,F_4,\\
    3&\mbox{for $\alpha$ a long root in type }G_2.
   \end{array}\right.\]
The Weyl group $W$ is generated by the simple reflections
    \begin{equation}
    \label{eqn:sigj}
    \sigma_j(\alpha_i)=\alpha_i-\lang\alpha_i,\alpha_j\rang\alpha_j,
    \end{equation}
where we define $\lang \alpha_i,\alpha_j \rang:=2\frac{(\alpha_i,\alpha_j)}{(\alpha_j,\alpha_j)}$. We will denote the $ij$-entry of the Cartan matrix by $c(i,j)=\lang\alpha_i,\alpha_j\rang$.  For $w \in W,$ let $l(w)$ be the number of $\sigma_j$ in any reduced expression for $w$, and put $sgn(w)=(-1)^{l(w)}$.

    Define the simple coroots $\check{\alpha}_i=2\alpha_i/(\alpha_i,\alpha_i)$ for $i=1,\ldots r$. The fundamental weights $\{\varpi_1,\ldots,\varpi_r\}$ of $\Phi$ are the corresponding dual basis with respect to $(\cdot,\cdot)$. In particular, we have $\lang \varpi_i,\alpha_j\rang=\delta_{ij}$ and may express each simple root $\alpha_i=\sum c(i,j)\varpi_j$  as a linear combination of fundamental weights. Let $L$ be the weight lattice of $\Phi$ generated by the fundamental weights. There is a partial order on $L$: we say $\mu \succeq \xi$ if $\mu-\xi=\sum k_i\varpi_i$ with all $k_i$ nonnegative. We say $\mu \in L$ is dominant if $\lang \mu,\alpha_i\rang \geq 0$ for all $i=1,\ldots, r$ and regular dominant if the inequality is strict. For example, $\rho=\sum_{i=1}^r \varpi_i$ is a regular dominant weight.

    \subsection{Chinta--Gunnells construction}
    \label{sec:cgconstruction}
       Recall that $P$ is a prime of $K=\F_q({T})$ of norm $p=q^{\deg{P}}$. Here we describe the Chinta--Gunnells method \cite{wmnds} to construct, for each prime $P$, generating functions in $p^{-s_1},\ldots, p^{-s_r}$ for the $P$-power coefficients of $Z(\bs;\bm)$. These generating functions are called $p$-parts. This construction involves a technique analogous to the Weyl character formula to define an invariant rational function by averaging over the Weyl group. 

We first define an appropriate Weyl group action on rational functions. Fix an $r$-tuple of nonnegative integers $\ell=(l_1,\ldots,l_r)$. The $\ell$ we choose will be determined by $P$ and the twisting parameter $\bm$; hence, we also call $\ell$ a twisting parameter. The tuple $\ell$ determines a regular dominant weight
\begin{equation}
\label{eqn:theta}
\theta=\theta(\ell):=\sum_{i=1}^r (l_i+1)\varpi_i
\end{equation}
and a $W$-action on $\Lambda$:
\begin{equation}
\label{eqn:bullet}
w\bullet\lambda:=w(\lambda-\theta)+\theta.
\end{equation}
Note that when $w=\sigma_j$ is a simple reflection, we have 
\(
\sigma_j\bullet\lambda=\sigma_j\lambda+(l_j+1)\alpha_j.
\)

Consider $A=\C[\Lambda]$, the ring of Laurent polynomials on $\Lambda$. The ring $A$ consists of all expressions $f$ of the form $f=\sum_{\beta \in \Lambda} c_\beta \x^\beta$ with $c_\beta \in \C$ almost all zero. Multiplication in $A$ is defined by addition in $\Lambda$: $\x^\beta\x^\lambda=\x^{\lambda+\beta}$ and we identify $A$ with $\C[x_1,x_1^{-1}\ldots,x_r,x_r^{-1}]$ via $\x^{\alpha_i}\mapsto x_i$. 

Our goal is to define a Weyl group action on the field of fractions $\tilde{A}$ of $A$. First, define a change of variables action on $A$ by 
\begin{equation}
\label{eqn:cov}
\left(\sigma_j(\x)\right)_i=p^{-c_{ij}}x_ix_j^{-c_{ij}}.
\end{equation}
This action is essentially a reformulation of the standard action of $W$ on $\Lambda$. One can check that if $f_\beta(\x)=\x^\beta$ is a monomial, then
\(
f_\beta(w\x)=p^{d(w^{-1}\beta-\beta)}\x^{w^{-1}\beta}.
\)

For $\alpha \in \Phi$, let
\(
n(\alpha)=n/\gcd(n,\|\alpha\|^2)\). Consider the sublattice $\Lambda'\subset \Lambda$ generated by the set $\{n(\alpha)\alpha\}_{\alpha \in \Phi}$. Define $\tilde{A}_\lambda$ as the set of functions $f/g \in \tilde{A}$ such that $g$ lies in the kernel of the map $\nu:\Lambda \to \Lambda/\Lambda'$ and $\nu$ maps the support of $f$ to $\lambda$. Then we have the decomposition
\[
\tilde{A}=\bigoplus_{\lambda \in \Lambda/\Lambda'}\tilde{A}_\lambda.
\]
For $f(\x) \in A_\beta$, the Chinta-Gunnells action associated to the simple reflection $\sigma_k\in W$ is 
\begin{equation}
\label{eqn:act}
(f|_\ell \sigma_k)(\x)=(\mathcal{P}_{\beta,\ell,k}(x_k)+\mathcal{Q}_{\sigma\bullet\beta,\ell,k}(x_k))f(\sigma_k\x),
\end{equation}
where $\mathcal{P}$ and $\mathcal{Q}$ are rational functions defined below in the following way: fix $k \in \{1,\ldots, r\}$ and for $\lambda \in \Lambda$, define \(\delta_{\ell,k}=\delta_k(\lambda):=d(\sigma_k\bullet\lambda-\lambda)\). For positive integers $a$ and $m$, let $(a)_m:=a-m\lfloor a/m\rfloor$ be the remainder of $a$ upon division by $m$. We put $(-a)_m=0$ if $(a)_m=0$ and $(-a)_m=m-(a)_m$ otherwise.  Then
    \begin{align}
    \label{eqn:locPQ}
    \mathcal{P}_{\beta,\ell,k}(x_k)&=(p x_k)^{l_k+1-(\delta_k(\beta))_{n(\alpha_k)}}\frac{1-1/p}{1-(p x_k)^{n(\alpha_k)}/p},\\
    \label{eqn:locQ}
    \mathcal{Q}_{\beta,\ell,k}(x_k)&=-g^\ast_{-\|\alpha_k\|^2\delta_k(\beta)}(P)(p x_k)^{l_k+1-{n(\alpha_k)}}\frac{1-(p x_k)^{n(\alpha_k)}}{1-(p x_k)^{n(\alpha_k)}/p},\nonumber
    \end{align}
     where $g^\ast_t(P)$ denotes the normalized Gauss sum
    \begin{equation}\label{gdef}
    g^\ast_t(P)=\left\{\begin{array}{ll}-1 &\mbox{ if } t\equiv 0\mod{n},\\
    g_t(P)/p &\mbox{ otherwise.}\end{array}\right.\end{equation}
Note that action \eqref{eqn:act} satisfies the defining relations for $W$ \cite[Theorem 3.2]{wmnds} and hence it extends to all of $W$.

We are now ready to define the $p$-parts in terms of a $W$-invariant rational function. Let $j(w,\x)=sgn(w)\prod_{\alpha \in \Phi(w)}\x^\alpha$, $\Delta(\x)=\prod_{\alpha>0} (1-p^{n(\alpha)d(\alpha)}\x^{n(\alpha)\alpha})$, and $D(\x)=\prod_{\alpha>0} (1-p^{n(\alpha)d(\alpha)-1}\x^{n(\alpha)\alpha})$. Then
\begin{equation}
\label{eqn:ppart}
F(\x;\ell):=\frac{1}{\Delta(x)}\sum_{w \in W} j(w,\x)(1|_{\ell}w)(\x),
\end{equation}
is invariant under \eqref{eqn:act} and $F(\x,\ell)D(\x)$ is polynomial in the $x_i=p^{-s_i}$ \cite[Theorem 3.5]{wmnds}.
For each $\ell$, the {\em $p$-part} is the polynomial $N(\x;\ell):=F(\x,\ell)D(\x)$. The $\ell$ we choose is determined by $P$ and $\bm$ in the following way: let $l_i$ be the largest nonnegative integer such that $P^{l_i}$ divides $m_i$, denoted $P^{l_i}||m_i$. 
Then the coefficient 
    \(H(P^{k_1},\ldots, P^{k_r};P^{l_1},\ldots, P^{l_r})
        \) of $Z(\bs;\bm)$ is the $x_1^{k_1}\cdots x_r^{k_r}$ coefficient of $N(\x;\ell)$. In other words, the $p$-parts are generating functions for the $P$-power coefficients of $Z(\bs;\bm)$.

  We remark that the invariance of $F(\x;\ell)$ under the action \eqref{eqn:act} yields a functional equation. Write $F(\x,\ell)=\sum_{\beta \in \Lambda/\Lambda'} f_\beta(\x)$ so that $f_\beta(\x) \in \tilde{A}_\beta$. Then
   \(
   F(\x;\ell)=\sum_{\beta \in \Lambda/\Lambda'}(\mathcal{P}_{\beta,\ell,k}(x_k)+\mathcal{Q}_{\sigma\bullet\beta,\ell,k}(x_k))f_\beta(\sigma_k\x).
   \)
  One checks that $\mathcal{P}_{\beta,\ell,k}(x_k)f_\beta(\sigma_k\x)\in \tilde{A}_\beta$ and $\mathcal{Q}_{\sigma\bullet\beta,\ell,k}(x_k)f_\beta(\sigma_k\x) \in \tilde{A}_{\sigma_k\bullet \beta}$. Since $\sigma_k\bullet$ is an involution, it follows from \cite[Theorem 3.5]{wmnds} that
   \begin{equation}
   \label{eqn:ppfe}
   F_\beta(\x)=\mathcal{P}_{\beta,\ell,k}(x_k)F_\beta(\sigma_k\x)+\mathcal{Q}_{\beta,\ell,k}(x_k)F_{\sigma_k\bullet\beta}(\sigma_k\x).
   \end{equation}

  \subsection{Properties of $p$-parts}
  \label{subsec:ppart}
We will briefly review some structural properties of the $p$-parts. Recall that $\theta=\sum_{i=1}^r (l_i+1)\varpi_i$. Write $N(\x;\theta)=N(\x;\ell)=\sum a_\lambda \x^\lambda$, where it is understood that $P$ is fixed and for $\lambda={\sum_{i=1}^r k_i\alpha_i}$, we have $a_\lambda=H(P^{k_1},\ldots, P^{k_r};P^{l_1},\ldots, P^{l_r})$.
 Let $\Pi_\theta$ be the convex hull of $\theta-w\theta$ for $w \in W$. More precisely, $\Pi_\theta$ is the weight polytope for the irreducible representation of {\em lowest} weight $-\theta$, shifted by $\theta$. If $\Theta$ is the set of dominant weights in the representation of {\em highest} weight $\theta$, then all points of $\Pi_\theta$ have the form $\theta-w\xi$ where $w \in W$ and $\xi \in \Theta$. Proof of the following results may be found in \cite{me2}.

%

  

\begin{thm}
\label{thm:support}
The support of $N(\x;\theta)$ is contained in $\Pi_\theta$. \qed
\end{thm}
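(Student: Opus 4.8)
The plan is to show that every monomial $\x^\lambda$ appearing in $N(\x;\theta)=F(\x;\ell)D(\x)$ has $\lambda\in\Pi_\theta$, by tracking where monomials can come from in the defining sum $\sum_{w\in W}j(w,\x)(1|_\ell w)(\x)$ after multiplication by $D(\x)$. First I would make the Weyl-group action explicit on monomials: using the change-of-variables rule \eqref{eqn:cov} together with the identity $f_\beta(w\x)=p^{d(w^{-1}\beta-\beta)}\x^{w^{-1}\beta}$, one computes that $(1|_\ell w)(\x)$ is a product of the rational prefactors $\mathcal{P}$ and $\mathcal{Q}$ arising from a reduced word for $w$, times a monomial; the key point is to identify the ``leading'' exponent. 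Because $w\bullet$ is the shifted action $w(\lambda-\theta)+\theta$, applying $1|_\ell w$ to the constant function $1$ produces a term whose dominant monomial is $\x^{\theta-w\theta}$ (up to the denominators $\Delta(\x)$ and the geometric-series factors hidden in $\mathcal{P},\mathcal{Q}$), with the remaining monomials obtained by subtracting nonnegative combinations of the $n(\alpha)\alpha$ coming from expanding $1/(1-(px_k)^{n(\alpha_k)}/p)$.

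Next I would organize the bookkeeping. After clearing $\Delta(\x)$ against $\sum_w j(w,\x)(1|_\ell w)(\x)$ and then multiplying by $D(\x)=\prod_{\alpha>0}(1-p^{n(\alpha)d(\alpha)-1}\x^{n(\alpha)\alpha})$, every monomial of $N(\x;\theta)$ is of the form $\theta-w\theta$ minus a sum of terms $n(\alpha)\alpha$ with $\alpha>0$ (the minus signs coming from the geometric expansions), possibly plus terms $n(\alpha)\alpha$ from $D(\x)$; one must check these competing contributions still land in the polytope. The cleanest route is to invoke the functional-equation structure \eqref{eqn:ppfe}: $F$ is $|_\ell$-invariant, so its support as a subset of $\Lambda/\Lambda'$ is $W$-stable under $\sigma_k\bullet$, and the explicit denominators force the true support of the \emph{polynomial} $N(\x;\theta)$ to be contained in the $W$-orbit hull of the top weight. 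Concretely: $N(\x;\theta)$ is $W$-invariant under the $\bullet$-twisted action in the sense inherited from $F$, its Newton polytope is therefore $W$-invariant about the center $\theta$ (equivalently, invariant under $\lambda\mapsto w\bullet\lambda$), and it contains the vertex $\theta-w_0\theta$; any $W$-invariant lattice polytope containing that vertex and centered appropriately is contained in $\Pi_\theta=\mathrm{conv}\{\theta-w\theta:w\in W\}$, because $\Pi_\theta$ is by definition the convex hull of the full orbit of that vertex.

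The step I expect to be the main obstacle is the precise claim that the geometric-series expansions in $\mathcal{P}$ and $\mathcal{Q}$, after cancellation against $D(\x)$, never push an exponent \emph{outside} the orbit hull — i.e.\ controlling the lower-order monomials rather than the leading one. This requires showing that subtracting $n(\alpha)\alpha$ from a point $\theta-w\xi$ with $\xi$ a dominant weight of the representation of highest weight $\theta$ again yields a point of the form $\theta-w\xi'$ with $\xi'\in\Theta$, which is essentially the statement that weights of an irreducible representation are closed under subtracting positive roots down to the lowest weight — a standard fact about weight polytopes (saturation). I would therefore reduce the whole argument to: (i) the leading exponents are exactly $\{\theta-w\theta\}$, handled by the explicit action; (ii) all other exponents differ from a leading one by a nonnegative root combination, handled by expanding the denominators; and (iii) the weight polytope $\Pi_\theta$ is saturated under such subtractions, handled by citing \cite{me2} and the representation-theoretic description of $\Pi_\theta$ given just before the statement. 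Since the excerpt explicitly says ``Proof of the following results may be found in \cite{me2},'' I would in fact present this as a brief sketch pointing to that reference, emphasizing (i)–(iii) as the conceptual skeleton.
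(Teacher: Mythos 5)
The paper itself gives no proof here; it simply cites \cite{me2}. So the question is whether your sketch would go through if fleshed out, and I see two genuine obstructions.

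First, your ``$W$-invariance'' argument runs in the wrong direction. You argue that the Newton polytope of $N(\x;\theta)$ is $\bullet$-invariant (about the fixed point $\theta$), contains the vertex $\theta-w_0\theta$, and conclude that it is contained in $\Pi_\theta$. But a $\bullet$-invariant convex set containing $\theta-w_0\theta$ automatically \emph{contains} the full orbit $\{\theta-w\theta\}$ and hence contains $\Pi_\theta$; invariance plus one vertex gives a \emph{lower} bound on the polytope, not an upper bound. What the theorem asserts is the reverse inclusion, so nothing in this paragraph controls the support from above. What is actually needed is a degree bound on $N(\x;\theta)$ in each variable, of exactly the kind obtained in the proof of Theorem~\ref{thm:rat}: apply the functional equation for the long word $w_0$, multiply through by the appropriate denominators, and compare the growth of both sides as $x_i\to\infty$ to bound $\deg_{x_i}N$ by the $\alpha_i$-coordinate of $\theta-w_0\theta$. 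That, combined with invariance, pins the support to $\Pi_\theta$.

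Second, your fallback step (iii) invokes a false ``saturation'' statement. The weights of an irreducible representation are not closed under subtracting positive roots: starting at an extremal weight $w\theta$ with $w\neq e$ and subtracting a suitable positive root can and does leave the weight polytope (already in $A_1$, from $-\varpi$ subtract $\alpha=2\varpi$). The correct saturation property of $\Pi_\theta$ is that it equals the intersection of the convex hull of the $W$-orbit with a coset of the root lattice; it gives no blanket closure under subtracting $n(\alpha)\alpha$. Relatedly, your sign bookkeeping in (ii) is off: expanding $1/(1-(px_k)^{n(\alpha_k)}/p)$ as a geometric series contributes monomials with exponents \emph{increased} by multiples of $n(\alpha_k)\alpha_k$, and multiplying by $D(\x)$ also adds such exponents, so before cancellation the raw expansion is not supported in $\Pi_\theta$ at all. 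The containment only emerges after the cancellations across the $W$-sum and against $D(\x)$, and that cancellation is precisely what the functional-equation/degree-bound argument (or the explicit recurrence analysis in \cite{me2}) is designed to control. As it stands, the proposal does not close that gap.
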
 
We will refer to the coefficients associated to the vertices of $\Pi_\theta$ as stable coefficients. When $n \gg r$, these are the only nonzero coefficients. The stable coefficients are completely determined by recurrence relations on the coefficients of the $p$-parts (after setting the constant term to be 1). The unstable coefficients (those strictly inside of the polytope $\Pi_\theta$) are also determined by the recurrence relations, as we see in the following theorem.

\begin{thm}
\label{thm:unstab}
Define $\Theta^+$ to be the set of all regular dominant weights in the representation of highest weight $\theta$.  Then the $p$-part $N(\x;\theta)$ is completely determined up to the coefficients $a_{\theta-\xi}$ for $\xi \in \Theta^+$. \qed
\end{thm}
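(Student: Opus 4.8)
The plan is to prove, by induction on the height $d(\lambda)$, that every coefficient $a_\lambda$ of $N(\x;\theta)=\sum a_\lambda\x^\lambda$ is forced by the functional equations \eqref{eqn:ppfe} once the values $a_{\theta-\xi}$, $\xi\in\Theta^+$, are prescribed. Since by Theorem \ref{thm:support} the support of $N(\x;\theta)$ lies in $\Pi_\theta$, and every lattice point of $\Pi_\theta$ has the form $\theta-\mu$ for $\mu$ a weight of the irreducible representation of highest weight $\theta$, this is precisely the assertion of the theorem. The input is the invariance of $F=N/D$ under the Chinta--Gunnells action: after clearing the denominators of $\mathcal{P}_{\beta,\ell,k}$, $\mathcal{Q}_{\beta,\ell,k}$ and of $D$ --- using that, under the change of variables $\sigma_k$, $D(\x)$ transforms into $D(\x)$ times an explicit Laurent monomial ratio supported on $\Z\alpha_k$ --- the coefficient of $\x^\lambda$ in \eqref{eqn:ppfe} becomes, for each $k$, a linear relation among the $a_\mu$ with $\mu$ on the $\sigma_k$-$\bullet$-string through $\lambda$ and on the partner string through $\sigma_k\bullet\lambda$ (here $\beta$ denotes the class of $\lambda$ in $\Lambda/\Lambda'$). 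The base case $\lambda=0=\theta-\theta$ is covered by the normalization $a_0=1$, since $\theta$ is regular dominant, so $a_0$ is one of the prescribed values.

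For the inductive step write $\lambda=\theta-\mu$ and use $d(\sigma_k\bullet\lambda)-d(\lambda)=\langle\mu,\alpha_k\rangle$, so that $\sigma_k\bullet\lambda=\lambda+\langle\mu,\alpha_k\rangle\alpha_k$. If $\mu$ is \emph{not} dominant, pick a simple root $\alpha_k$ with $\langle\mu,\alpha_k\rangle<0$; then $\lambda$ is the top of its $\sigma_k$-$\bullet$-string, whose remaining weights all have height below $d(\lambda)$, and solving the $\sigma_k$-functional equation against the one for the class $\sigma_k\bullet\beta$ expresses $a_\lambda$ through coefficients of strictly smaller height, already known by induction. If $\mu$ is regular dominant, i.e.\ $\mu\in\Theta^+$, then $\langle\mu,\alpha_k\rangle>0$ for every $k$, every functional equation moves $\lambda$ strictly upward, no relation determines $a_\lambda$ from below, and $a_\lambda=a_{\theta-\mu}$ is one of the prescribed parameters. (The vertices of $\Pi_\theta$, the stable coefficients, fall under the first case --- $\mu$ is then a nonzero $W$-image of the regular weight $\theta$ --- and so are determined by $a_0$ alone, consistently with the discussion preceding the theorem.)

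The remaining case, $\mu=\xi$ dominant but \emph{not} regular --- the ``wall'' weights --- is where I expect the real work to lie. Here $J:=\{\,i:\langle\xi,\alpha_i\rangle=0\,\}$ is nonempty, and each $\sigma_i$ with $i\in J$ fixes $\lambda$ under the $\bullet$-action, so its functional equation is self-referential. Carrying out the coefficient extraction in that situation --- the relevant $\mathcal{P}+\mathcal{Q}$ collapses to a Laurent monomial once one uses $g^\ast_0(P)=-1$ --- one obtains a relation of the shape
\[
(1+p)\,a_\lambda \;=\; p^{\,n(\alpha_i)+1}\,a_{\lambda-n(\alpha_i)\alpha_i}\;+\;p^{-n(\alpha_i)}\,a_{\lambda+n(\alpha_i)\alpha_i},
\]
which does not by itself close the induction, since $\lambda+n(\alpha_i)\alpha_i$ has height $d(\lambda)+n(\alpha_i)>d(\lambda)$. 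To finish I would bring in the full parabolic subgroup $W_J=\langle\,\sigma_i:i\in J\,\rangle$, which fixes both $\xi$ and $\lambda$: its functional equations together involve $a_\lambda$ only alongside coefficients indexed by weights along the $\Phi_J$-directions through $\xi$, and since the restriction of $\xi$ to $\Phi_J$ is the trivial weight, Theorem \ref{thm:support} controls exactly how far those strings extend inside $\Pi_\theta$. Applying the same statement to the lower-rank system $\Phi_J$ --- this is where one runs the induction simultaneously over the rank of $\Phi$ --- then forces $a_\lambda$ in terms of coefficients at weights $\lambda-\xi'$ with $\xi'$ a nonzero dominant weight of $\Phi_J$, all of height strictly below $d(\lambda)$, together with coefficients at weights outside $\Pi_\theta$, which vanish. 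The bookkeeping of the $\Lambda/\Lambda'$-class decomposition along the $\Phi_J$-directions, and of which string endpoints leave $\Pi_\theta$, is the crux of the argument.
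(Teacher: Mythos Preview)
The paper does not itself prove this theorem; it is stated with a \qed and the proof is deferred to \cite{me2} (see the sentence preceding Theorem~\ref{thm:support}). So there is no in-paper argument to compare against, only the external reference.

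Your induction-on-height strategy is natural and the case split is right: when $\mu=\theta-\lambda$ is not dominant the recurrence for a suitable $\sigma_k$ pushes $\lambda$ strictly downward, and when $\mu$ is regular dominant nothing determines $a_\lambda$ from below, yielding exactly the free parameters indexed by $\Theta^+$. Your computation that $\mathcal P+\mathcal Q$ collapses to the monomial $(px_k)^{l_k+1-n(\alpha_k)}$ when $\sigma_k\bullet\lambda=\lambda$ is also correct.

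The genuine gap is the wall case, which you flag but do not close. The proposed reduction to $\Phi_J$ has a concrete obstruction: for $i\in J$ the change of variables \eqref{eqn:cov} sends $x_j\mapsto p^{-c(j,i)}x_jx_i^{-c(j,i)}$, so it moves $x_j$ for every $j$ adjacent to $i$ in the Dynkin diagram, including those with $j\notin J$. Hence the $W_J$-functional equations do \emph{not} restrict to a closed system on the $\Lambda_J$-coset through $\lambda$; the slice you obtain is not a $\Phi_J$ $p$-part, and you cannot invoke the lower-rank statement on it directly. Moreover, your displayed relation $(1+p)a_\lambda=p^{\,n(\alpha_i)+1}a_{\lambda-n(\alpha_i)\alpha_i}+p^{-n(\alpha_i)}a_{\lambda+n(\alpha_i)\alpha_i}$, even taken for all $i\in J$ simultaneously, introduces one new higher-height unknown $a_{\lambda+n(\alpha_i)\alpha_i}$ per equation, and those unknowns are themselves linked by further wall relations; showing the resulting finite linear system is nonsingular is real work that your sketch does not supply. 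As written, the wall case is a plan, not a proof; you would need either to carry out that linear-algebra argument in full, or---more in the spirit of \cite{me2}, judging from the remark after Theorem~\ref{thm:unstab}---to argue via the explicit averaged form \eqref{eqn:ppart} rather than purely from the recurrences.
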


In fact, the proof of \cite[Theorem 5.4]{me2} shows that any polynomial satisfying the recurrence relations of $N(\x;\theta)$ can be written as a sum of shifted $p$-parts. 

\begin{thm} 
\label{thm:unstabsum} Define $\Theta^+$ to be the set of all regular dominant weights in the representation of highest weight $\theta$. Suppose that $\mathcal{N}(\x)$ is any polynomial satisfying the recurrence relations of $N(\x;\theta)$. Then
\[\mathcal{N}(\x)=\sum_{\xi \in \Theta^+} m_\xi N(\x;\xi)\xi^{\theta-\xi},\]
where $m_{\xi}$ are complex numbers. \qed

\end{thm}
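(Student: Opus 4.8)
The plan is to identify the solution space of the recurrences with a finite‑dimensional space spanned by the shifted $p$-parts, using a dimension count together with a triangularity property. Write $V$ for the complex vector space of all polynomials satisfying the recurrence relations of $N(\x;\theta)$; since these relations are linear in the coefficients (they come from clearing denominators in the functional equations \eqref{eqn:ppfe} and comparing coefficients), $V$ is a vector space containing $N(\x;\theta)$. Because $\theta$ is itself a regular dominant weight of the representation of highest weight $\theta$, we have $\theta\in\Theta^+$, and Theorem \ref{thm:unstab} says exactly that the linear evaluation map $V\to\C^{\Theta^+}$, $\mathcal N=\sum b_\lambda\x^\lambda\mapsto(b_{\theta-\xi})_{\xi\in\Theta^+}$, is injective; in particular $\dim V\le|\Theta^+|$. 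So it suffices to exhibit $|\Theta^+|$ linearly independent elements of $V$, and the natural candidates are the $N(\x;\xi)\,\x^{\theta-\xi}$ for $\xi\in\Theta^+$.

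First I would check that each $N(\x;\xi)\,\x^{\theta-\xi}$ really lies in $V$. Its support sits inside $\Pi_\theta$: by Theorem \ref{thm:support} the support of $N(\x;\xi)$ lies in $\Pi_\xi$, and since $\xi$ is a weight of the representation of highest weight $\theta$, every point of $\theta-\xi+\Pi_\xi$ has the form $\theta-w\zeta$ with $\zeta$ a weight of that representation, hence lies in $\Pi_\theta$. Showing that the shifted $p$-part satisfies the $\theta$-recurrences is the substantive point, and it reduces to a calculation with the explicit rational functions \eqref{eqn:locPQ}: writing $\theta=\theta(\ell)$ and $\xi=\theta(\ell')$, one has $(l_k+1)-(l'_k+1)=\langle\theta-\xi,\alpha_k\rangle$, and a short computation shows that $\delta_k(\beta)$ formed with parameter $\ell$ coincides with $\delta_k(\beta')$ formed with parameter $\ell'$ whenever $\beta'=\beta-(\theta-\xi)$, so the Gauss‑sum factors $g^\ast_{-\|\alpha_k\|^2\delta_k(\,\cdot\,)}(P)$ agree; then tracking the remaining powers of $p$ and of $x_k$ through the change of variables $\sigma_k$ turns the functional equations \eqref{eqn:ppfe} for $F(\x;\xi)$ into those for $F(\x;\theta)$ after multiplication by $\x^{\theta-\xi}$. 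This is essentially what the proof of \cite[Theorem 5.4]{me2} establishes, and I expect it to be the main obstacle, because it requires following the $\Lambda/\Lambda'$-grading of $\tilde A$ through the monomial shift and correctly matching the graded components on the two sides of each functional equation.

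With that in place, the rest is linear algebra. I would form the matrix $M=(M_{\xi',\xi})_{\xi',\xi\in\Theta^+}$ whose entry $M_{\xi',\xi}$ is the coefficient of $\x^{\theta-\xi'}$ in $N(\x;\xi)\,\x^{\theta-\xi}$, equivalently the coefficient of $\x^{\xi-\xi'}$ in $N(\x;\xi)$. By Theorem \ref{thm:support} this vanishes unless $\xi-\xi'\in\Pi_\xi$, which forces $\xi'=w\zeta$ for some $w\in W$ and some dominant weight $\zeta$ of the representation of highest weight $\xi$; since $\xi'$ is dominant this gives $\xi'=\zeta$, so $\xi-\xi'$ is a nonnegative integer combination of simple roots and $\xi'$ lies below $\xi$ in the root order. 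Meanwhile $M_{\xi,\xi}$ is the constant term of $N(\x;\xi)$, which equals $1$. Ordering $\Theta^+$ by a linear extension of the root order therefore makes $M$ triangular with $1$'s on the diagonal, hence invertible; together with the bound $\dim V\le|\Theta^+|$ this shows $\{N(\x;\xi)\,\x^{\theta-\xi}:\xi\in\Theta^+\}$ is a basis of $V$. To finish, given $\mathcal N=\sum b_\lambda\x^\lambda\in V$ I would solve the triangular system $\sum_\xi m_\xi M_{\xi',\xi}=b_{\theta-\xi'}$ for the scalars $m_\xi$; then $\sum_\xi m_\xi N(\x;\xi)\,\x^{\theta-\xi}$ lies in $V$ and has the same coefficient as $\mathcal N$ at every monomial $\x^{\theta-\xi'}$, $\xi'\in\Theta^+$, so by the injectivity in Theorem \ref{thm:unstab} the two polynomials are equal, which is the claimed formula.
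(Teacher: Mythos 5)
The paper supplies no proof text for this theorem — it is stated with a terminal \qed and the preceding sentence simply delegates to the proof of~\cite[Theorem 5.4]{me2} — so there is nothing to compare line by line. Your argument is sound and is essentially the one that must underlie that citation: you combine the injectivity from Theorem~\ref{thm:unstab} (giving $\dim V\le|\Theta^+|$ for the solution space $V$ of the recurrences) with the triangularity that Theorem~\ref{thm:support} forces on the evaluation matrix $M$, and the only substantive verification is the one you isolate, namely that each $N(\x;\xi)\x^{\theta-\xi}$ satisfies the $\theta$-recurrences. Your sketch of that verification is correct and complete enough to reconstruct: with $\beta'=\beta-(\theta-\xi)$ one has $\delta_k(\beta')=\delta_k(\beta)$ (the left side formed with $\ell'$, the right with $\ell$, since in either case $\delta_k(\lambda)=\langle\theta(\ell)-\lambda,\alpha_k\rangle$), $l_k-l'_k=\langle\theta-\xi,\alpha_k\rangle$, and $(\sigma_k\x)^{\theta-\xi}=(px_k)^{-\langle\theta-\xi,\alpha_k\rangle}\x^{\theta-\xi}$; hence $\mathcal{P}_{\beta',\ell',k}(x_k)(px_k)^{\langle\theta-\xi,\alpha_k\rangle}=\mathcal{P}_{\beta,\ell,k}(x_k)$ and likewise for $\mathcal{Q}$, while the graded indices match because $\sigma_k\bullet\beta$ (under the $\theta$-action) and $\sigma_k\bullet\beta'$ (under the $\xi$-action) differ exactly by $\theta-\xi$. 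One small caveat worth making explicit: Theorem~\ref{thm:unstab} as literally stated says the $p$-part $N(\x;\theta)$ is determined by the coefficients at $\theta-\xi$, whereas your injectivity step needs the slightly stronger reading that \emph{any} solution of the recurrences is so determined; that reading is clearly the intended one (it is also what the proof of Theorem~\ref{thm:localglobal} uses), but it deserves a sentence.
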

Finally, multiplying the $p$-parts by the rational factor $\Delta(\x)/D(\x)$ reflects the support over $\Pi_\theta$. 
\begin{thm}
\label{thm:gap}
The support of $\Delta(\x)N(\x;\theta)$ lies outside the polytope $\Pi_\theta$. \qed
\end{thm}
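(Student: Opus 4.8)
The plan is to deduce the statement from Theorem~\ref{thm:support} and the functional equations~\eqref{eqn:ppfe}, by pinning down what multiplication by $\Delta$ does to the $p$-part; the guiding principle is that $\Delta$ is a \emph{metaplectic Weyl denominator}, so passing from the $W$-invariant function $F(\x;\theta)$ to $\Delta(\x)F(\x;\theta)=\tfrac{\Delta(\x)}{D(\x)}N(\x;\theta)$ (and thence to $\Delta(\x)N(\x;\theta)=D(\x)\sum_{w}j(w,\x)(1|_\ell w)(\x)$) is the analogue of passing from a Weyl character $\chi_\lambda$ to the alternant $\chi_\lambda\cdot(\text{Weyl denominator})$. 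The first step is to record the transformation law: under the change of variables~\eqref{eqn:cov} attached to a simple reflection $\sigma_k$, the factors of $\Delta(\x)=\prod_{\alpha>0}(1-p^{n(\alpha)d(\alpha)}\x^{n(\alpha)\alpha})$ indexed by $\alpha\neq\alpha_k$ are permuted among themselves --- the $p$-power $p^{\,n(\alpha)d(\sigma_k\alpha-\alpha)}$ that~\eqref{eqn:cov} attaches to $\x^{n(\alpha)\alpha}$ carries the $\alpha$-factor to the $\sigma_k\alpha$-factor --- while the $\alpha_k$-factor $1-p^{n(\alpha_k)}x_k^{n(\alpha_k)}$ is sent to $1-p^{-n(\alpha_k)}x_k^{-n(\alpha_k)}=-p^{-n(\alpha_k)}x_k^{-n(\alpha_k)}\bigl(1-p^{n(\alpha_k)}x_k^{n(\alpha_k)}\bigr)$; hence $\Delta(\sigma_k\x)=-p^{-n(\alpha_k)}\x^{-n(\alpha_k)\alpha_k}\Delta(\x)$. (By contrast $D$ is \emph{not} quasi-anti-invariant: its $\alpha_k$-factor goes to a scalar multiple of $1-p^{\,n(\alpha_k)+1}x_k^{n(\alpha_k)}$, a factor of neither $\Delta$ nor $D$, and this asymmetry is precisely what forces the support to one side of $\Pi_\theta$ rather than leaving it symmetric about it.)

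Next I would turn this into a functional equation. Decomposing $F=\sum_\beta F_\beta$ along $\Lambda/\Lambda'$ --- and noting $\Delta,D\in\tilde{A}_0$, so the grading is preserved --- the pieces $G_\beta:=\Delta F_\beta$ lie in $\tilde{A}_\beta$, and multiplying~\eqref{eqn:ppfe} by $\Delta$ and inserting the transformation law gives
\[
G_\beta=-p^{n(\alpha_k)}\x^{n(\alpha_k)\alpha_k}\bigl(\mathcal{P}_{\beta,\ell,k}\cdot G_\beta(\sigma_k\x)+\mathcal{Q}_{\beta,\ell,k}\cdot G_{\sigma_k\bullet\beta}(\sigma_k\x)\bigr),
\]
i.e.\ a copy of~\eqref{eqn:ppfe} twisted by the monomial $-p^{n(\alpha_k)}\x^{n(\alpha_k)\alpha_k}$ (and multiplying through by $D$ produces a companion functional equation for the graded pieces $\Delta N_\beta=D\,G_\beta$, carrying an extra one-variable rational prefactor coming from $D(\x)/D(\sigma_k\x)$). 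I would then adapt the recursion-control argument behind Theorem~\ref{thm:support}: iterate this twisted functional equation along a reduced word for the long element $w_0$, tracking the supports of the one-variable factors $\mathcal{P}_{\beta,\ell,k}$, $\mathcal{Q}_{\beta,\ell,k}$ and of the accumulated monomial twist. Over a full reduced word the latter contributes an exponent running through $\{\,n(\alpha)\alpha:\alpha>0\,\}$, which --- together with the built-in exponents $l_k+1-(\delta_k(\beta))_{n(\alpha_k)}$ of $\mathcal{P}$ --- pushes the admissible region for the support off of $\Pi_\theta$.

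The geometry this encodes (and the cleanest sanity check) is the case $n=1$: after the substitution $x_i\mapsto p^{-1}y_i$ one has $\Delta(\x)=\prod_{\alpha>0}(1-\y^{\alpha})=\sum_{w\in W}\sgn(w)\,\y^{\rho-w\rho}$, an honest Weyl denominator, so $\Delta(\x)F(\x;\theta)$ is literally an alternant times a symmetric factor; its extreme exponents are of the form associated to $W(\theta+\rho)$-type weights, which lie outside $\operatorname{conv}(W\theta)$ because each is cut off from it by a hyperplane in a dominant direction (as $\theta+\rho\succ\theta$). The general $n$ should be the metaplectic deformation of this picture, and that is exactly what the functional equations control. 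I expect the main obstacle to be the bookkeeping in the middle step: verifying that when the recursion is unwound the cancellations in the product are \emph{precisely} enough to evacuate $\Pi_\theta$ --- that there is genuinely a gap and not a residue of monomials clinging to its boundary --- and this is where the arithmetic of the exponents $(\delta_k(\beta))_{n(\alpha_k)}$, and the compatibility of $\delta_k$ with the $\Lambda/\Lambda'$-grading, must be used in full.
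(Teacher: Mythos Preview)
The paper does not prove this theorem; it is one of the structural results in Section~\ref{subsec:ppart} stated with a bare \qed and deferred to~\cite{me2}. There is thus no in-paper argument to compare your proposal against.

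On its own merits, your proposal is a reasonable outline but not yet a proof. The transformation law $\Delta(\sigma_k\x)=-p^{-n(\alpha_k)}\x^{-n(\alpha_k)\alpha_k}\Delta(\x)$ is correct, the twisted functional equation for $G_\beta=\Delta F_\beta$ follows from it as you say, and the $n=1$ heuristic is genuinely illuminating. But the decisive step --- ``adapt the recursion-control argument behind Theorem~\ref{thm:support}'' --- is asserted rather than executed, and that argument is itself not in this paper (it too is deferred to~\cite{me2}). Two specific gaps remain. First, the object whose support you must control is the \emph{polynomial} $\Delta N=DG$, not the rational function $G=\Delta F$; passing from your functional equation for $G_\beta$ to a coefficient recursion for $DG$ brings in the factor $D(\x)/D(\sigma_k\x)$, which (as you note) is not a monomial, so the clean ``monomial twist by $-p^{n(\alpha_k)}\x^{n(\alpha_k)\alpha_k}$'' picture does not persist at the polynomial level without further work. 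Second, even with a usable recursion in hand, iterating along a reduced word for $w_0$ does not by itself yield a support exclusion: you still need an argument that every coefficient indexed by a lattice point of $\Pi_\theta$ is forced to vanish, and you have not indicated how the recursion accomplishes this beyond the expectation that the accumulated shifts ``push the admissible region off of $\Pi_\theta$''. You flag this yourself (``the main obstacle is the bookkeeping''), but here the bookkeeping \emph{is} the proof.
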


As we shall see, these modified $p$-parts are related to the global series up to a variable change. 
\subsection{Global series coefficients}
\label{subsec:gsc}
Let us explain how to define the coefficients of the global series $Z(\bs;\bm)$. 
Recall that the $p$-parts are generating functions for the coefficients $H(P^{k_1},\ldots,P^{k_r};P^{l_1},\ldots,P^{l_r})$. 
        Up to a product of residue symbols, the coefficient $H(\bc;\bm)$ is the product over all $P$ dividing the $c_i$ of $H(P^{k_1},\ldots P^{k_r};P^{l_1},\ldots,P^{l_r})$, where $k_i$ and $l_i$ satisfy $P^{k_i}||c_i$ and $P^{l_i}||m_i$. 
        The residue symbols required follow from the twisted multiplicativity relation:
        for fixed $(c_1\cdots c_r,c_1'\cdots c_r')=1$, put
   	\[
   	H(c_1c_1',\ldots,c_rc_r';\bm)=H(c_1,\ldots,c_r;\bm)H(c_1',\ldots,c_r';\bm)\varphi({\bf c};{\bf c}'),
   	\]
   	where 
   	\[
   	\varphi({\bf c};{\bf c}')=\prod_{i=1}^r \leg{c_i}{c_i'}^{\|\alpha_i\|^2}\leg{c_i'}{c_i}^{\|\alpha_i\|^2}
   	\prod_{i<j}\leg{c_i}{c_j'}^{2c(i,j)}\prod_{i<j}\leg{c_i'}{c_j}^{2c(i,j)}.
   	\]
   	In addition, if $(c_1\cdots c_r;m_1'\cdots m_r')=1$, then
   	\[
   	H(c_1,\ldots,c_r;m_1m_1',\ldots,m_rm_r')=\prod_{j=1}^r \leg{m_j'}{c_j}^{-\|\alpha_j\|^2}H(c_1,\ldots,c_r;m_1,\ldots,m_r).
   	\]
   	
   	With this definition of $H(\bc;\bm)$, we can make definition \eqref{eqn:zdef1} explicit. Define the degree $n$ Weyl group multiple Dirichlet series of type $\Phi$ over $\F_q(T)$ with twisting parameter $\bm$ by
   	\begin{equation}
   	 \label{eqn:zdef}
   		Z({\bf s};{\bf m}):=\sum_{{\bf c}\in \mathcal{O}_{mon}^r} \frac{H({\bf c};{\bf m})}{|c_1|^{s_1}\cdots |c_r|^{s_r}}.
   	 \end{equation}
Also define the normalized series
   	 \(
   	    		Z^\ast({\bf s};{\bf m})=\Xi(\bs) Z(\bs;\bm),
   	\)
where
   	 \begin{equation}
   	 \label{eqn:xi}
   	 \Xi(\bs):=\prod_{\alpha=\sum k_i\alpha_i>0} \zeta_{\mathcal{O}}(1+n(\alpha)\sum_{i=1}^r k_i(s_i-1))
   	  \end{equation}
   	  and $\zeta_{\mathcal{O}}$ denotes the zeta function 
	  \(\zeta_{\mathcal{O}}(s):=\sum_{c \in \mathcal{O}_{mon}}|c|^{-s}=(1-q^{1-s})^{-1}.\) Note that like $Z(\bs;\bm)$, the product $\Xi(\bs)$ depends on $\Phi$ and $n$.

\section{Global Functional Equations}
\label{sec:GFE}
In this section we derive the functional equations of Weyl group multiple Dirichlet series defined over $\F_q(T)$. Our main reference is \cite{wmnds}, which derives the functional equations of $Z^\ast(\bs;\bm,K,\Phi,n)$ for $K$ any number field. Although it is understood that their arguments apply to $K$ any global field, we require explicit functional equations to discern the structural properties of the global series in Section \ref{sec:global} and thus, we include the specific details of the rational function field argument here.

    To state the functional equations of $Z^\ast(\bs;\bm)$, we first define a slightly more general class of series. Let $I=(I_1,\ldots,I_r)$ be an $r$-tuple of integers such that $I_j \in \{0,\ldots, n(\alpha_j)-1\}$. Then define
    \[Z(\bs;\bm,I):=\sum_{\stackrel{\bc \in \mathcal{O}_{mon}}{\deg c_j \equiv I_j \mod n(\alpha_j)}} \frac{H(\bc;\bm)}{|c_1|^{s_1}\cdots|c_r|^{s_r}}\]
       		        and 
    \[Z^\ast(\bs;\bm,I)=\Xi(\bs)Z(\bs;\bm,I).\]

Fix a simple reflection $\sigma_i \in W$. Define an action of $\sigma_i$ on the $r$-tuple $\bs$ by 
\[
(\sigma_i\bs)_j=s_j-c(j,i)(s_i-1).\] This action is consistent with the change of variables action \eqref{eqn:cov} on monomials. For $\bm$ and $I$ fixed, let $J_i(m,I)=\deg m_i-\sum_{j \neq i} c(j,i)I_j$, and let $\sigma_i\bullet I$ be the tuple whose $j$th entry is the $\alpha_j$ coefficient of $\sigma_i\bullet \big( \sum_{i=1}^r I_j\alpha_j\big)$. 
We will show that
    	\begin{align}
       		\label{eqn:globfe}
       		Z^\ast(\bs;\bm,I)&=|m_i|^{1-s_i} P_{I_i,J_i(m,I)}^{\|\alpha_i\|^2}(s_i)Z^\ast(\sigma_i\bs;\bm,I)\\
       		&+|m_i|^{1-s_i}Q_{I_i,J_i(m,I)}^{\|\alpha_i\|^2}(s_i)Z^\ast(\sigma_i\bs;\bm,\sigma_i\bullet I),\nonumber
       		\end{align}
       	where for integers $i$ and $j$ the functions $P_{i,j}^t(s_k)$ and $Q_{i,j}^t(s_k)$ are defined by
	        \begin{align}
       	\label{eqn:PQ}
       	P_{i,j}^t(s_k)&=-(q(q^{-s_k}))^{1-(j+1-2i)_{n(\alpha_k)}}\frac{q-1}{1-q^{n(\alpha_k)+1}(q^{-s_k})^{n(\alpha_k)}}\\
       	Q_{i,j}^t(s_k)&=-\tau(\epsilon^{t(2i-j-1)})(q(q^{-s_k}))^{1-n(\alpha_k)}\frac{1-q^{n(\alpha_k)}(q^{-{s_k}})^{n(\alpha_k)}}{1-q^{n(\alpha_k)+1}(q^{-s_k})^{n(\alpha_k)}}.\nonumber
       	\end{align}
	We note that our functional equations \eqref{eqn:globfe} agree with those appearing in \cite{C}, which treats the untwisted case when $K=\F_q(T)$ and $\Phi=A_2$.

    Verifying \eqref{eqn:globfe} requires several steps. The crux of the argument uses Kubota's Dirichlet series, which is a rank-one Weyl group multiple Dirichlet series. Specifically, we fix $c_j$ for $j\neq i$ to obtain a new series $\mathcal{E}(s_i;\hat{\bc}_i;\bm,I_i)$ from $Z^\ast(\bs;\bm,I)$ and show that it satisfies the same functional equations as Kubota's Dirichlet series. Writing $Z^\ast(\bs;\bm,I)$ in terms of $\mathcal{E}(s_i;\hat{\bc}_i;\bm,I_i)$, we obtain \eqref{eqn:globfe}. We proceed as follows: first we define Kubota's Dirichlet series and state its functional equations. Then we define $\mathcal{E}(s_i;\hat{\bc}_i;\bm,I_i)$ and explain how its functional equations relate to that of Kubota's Dirichlet series. Finally, we use the former to obtain functional equations for $Z^\ast(\bs;\bm,I)$.

%
    \subsection{Kubota's Dirichlet series}
    	Kubota's Dirichlet series is a Weyl group multiple Dirichlet series associated to $\Phi=A_1$. This series may be defined for any global field $K$ and has applications to higher order reciprocity laws \cite{Kubota}. When $K=\F_q(T)$ we have
    	        \begin{equation}
    	        \label{eqn:kubotafq}
    	        D(s,m;\epsilon^t)=\sum_{c \in \mathcal{O}_{mon}} \frac{g_t(m,c)}{|c|^s}.
    	        \end{equation}
    	To state the functional equations, let $0\leq i\leq n-1$ and $m \in \mathcal{O}_{mon}$ and define      
	 \begin{equation}
        \label{eqn:kubotai}
        D(s,m;\epsilon^t,i)=\sum_{\substack{c \in \mathcal{O}_{mon}\\\deg c\equiv i \mod n}} \frac{g_t(m,c)}{|c|^s}.
        \end{equation}
      	It is shown in \cite[Proposition 2.1]{hoffstein} that the normalized series $D^\ast(s,m;\epsilon^k,i)=(1-q^{n-ns})^{-1}D(s,m;\epsilon^k,i)$ satisfies the following functional equation:
        \begin{align}
        \label{eqn:kubotaFE}
        D^\ast(s,m;\epsilon^t,i)&=|m|^{1-s}P_{i,\deg m}^t(s)D^\ast(2-s,m;\epsilon^t,i)\\
        &+ |m|^{1-s}Q_{i,\deg m}^t(s)D^\ast(2-s,m;\epsilon^t,\deg m+1-i),\nonumber
        \end{align}
        where $P_{i,j}^t$ and $Q_{i,j}^t$ are defined in \eqref{eqn:PQ}. Note that $P^t_{i,j}$ and $Q^t_{i,j}$ depend only on the value of ${2i-j} \mod n$.

	       	To compare our notation with that of \cite{wmnds}, we now follow \cite{C,patterson} and express the $D(s,m;\epsilon^t,i)$ as sums over equivalence classes of $K_\infty^\times/(K_\infty^\times)^n$. For $c,\eta \in K_\infty^{\times}$, we say that $c \sim \eta$ if and only if $c/\eta \in (K_\infty^\times)^n.$ For such an $\eta$, define
       	\begin{equation}
       	\label{eqn:kubotaD}
       	\mathcal{D}(s,m;\epsilon^t,\eta)=\sum_{c \sim \eta} \frac{g_t(m,c)}{|c|^s}.
       	\end{equation}
       	The following lemma shows $\mathcal{D}(s,m;\epsilon^t,\pi_\infty^{-i})=D(s,m;\epsilon^t,i)$:
       	           	\begin{lem}
       	           	\label{lem:equivalence}
       	           Let $c \in \mathcal{O}_{mon} \subset K_\infty^{\times}$. Then $c \sim \pi_\infty^{-i}$ if and only if $\deg c\equiv i \mod n$.
       	            \end{lem}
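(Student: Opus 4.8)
The plan is to unwind the definition of the equivalence relation $c \sim \pi_\infty^{-i}$ and translate it into a congruence condition on $\deg c$. Recall that $c \sim \pi_\infty^{-i}$ means $c/\pi_\infty^{-i} = c\,\pi_\infty^{i} = cT^{-i} \in (K_\infty^\times)^n$, so the task is to characterize when an element of $K_\infty^\times$ is an $n$th power. First I would note that $K_\infty = \F_q((T^{-1})) = \F_q((\pi_\infty))$ is a complete discretely valued field with uniformizer $\pi_\infty$ and residue field $\F_q$, and its group of units decomposes as $K_\infty^\times \cong \pi_\infty^{\Z} \times \F_q^\times \times U^{(1)}$, where $U^{(1)} = 1 + \pi_\infty\F_q[[\pi_\infty]]$ is the group of principal units. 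The key structural fact is that since $\mathrm{char}(\F_q) = p \nmid n$ (as $q \equiv 1 \bmod 2n$ forces $\gcd(n,p)=1$), the principal units $U^{(1)}$ form an $n$-divisible group with no $n$-torsion — raising to the $n$th power is an isomorphism on $U^{(1)}$ via the binomial series / Hensel's lemma — and moreover $\F_q^\times$ is itself $n$-divisible because $q \equiv 1 \bmod n$ means $(\F_q^\times)^n = \F_q^\times$ is... wait, that is not quite right: $(\F_q^\times)^n$ has index $\gcd(n, q-1) = n$ in $\F_q^\times$ since $n \mid q-1$. So $(\F_q^\times)^n$ is the unique subgroup of index $n$.

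With this decomposition, an element $c \in K_\infty^\times$ lies in $(K_\infty^\times)^n$ if and only if (i) its valuation $-\deg c$ is divisible by $n$ — i.e. $n \mid \deg c$ — and (ii) its leading coefficient, as an element of $\F_q^\times$, lies in $(\F_q^\times)^n$. (The principal-unit component is automatically an $n$th power.) Now specialize to $c \in \mathcal{O}_{mon}$: here $c = T^{\deg c} + (\text{lower order})$, so when we expand $c$ in powers of $\pi_\infty = T^{-1}$ the leading term is $\pi_\infty^{-\deg c}$ with leading coefficient $1 \in \F_q^\times$, which is trivially an $n$th power. Therefore, for monic $c$, condition (ii) is vacuous and $c\,T^{-i} \in (K_\infty^\times)^n$ reduces purely to the valuation condition: $c\,T^{-i}$ has leading coefficient $1$ (still an $n$th power) and valuation $-\deg c + i$, so we need $n \mid (\deg c - i)$, i.e. $\deg c \equiv i \bmod n$. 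This is exactly the claim.

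Concretely, I would organize the write-up as: (1) observe $c \sim \pi_\infty^{-i} \iff cT^{-i} \in (K_\infty^\times)^n$; (2) write $cT^{-i} = \pi_\infty^{i-\deg c}\,u$ with $u \in 1 + \pi_\infty\F_q[[\pi_\infty]]$ a principal unit, using that $c$ is monic; (3) show $u$ is always an $n$th power in $K_\infty^\times$ via Hensel's lemma (the polynomial $X^n - u$ has a simple root mod $\pi_\infty$, namely $X=1$, since $p \nmid n$); (4) conclude $cT^{-i} \in (K_\infty^\times)^n \iff \pi_\infty^{i - \deg c} \in (K_\infty^\times)^n \iff n \mid (i - \deg c)$, where the last step uses that the valuation map $K_\infty^\times \to \Z$ sends $(K_\infty^\times)^n$ onto $n\Z$ and that $i - \deg c \in n\Z$ combined with $u^{1/n}$ already being a unit pins down divisibility. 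I expect the main (only mild) obstacle to be the careful justification in step (3) that principal units are $n$th powers — one should either cite the standard structure theory of $K_\infty^\times$ for a local field of residue characteristic prime to $n$, or give the one-line Hensel's lemma argument; either way it is routine given $\gcd(n,p)=1$, which the running hypothesis $q \equiv 1 \bmod 2n$ guarantees. Everything else is bookkeeping with the uniformizer $\pi_\infty = T^{-1}$ and the observation that monicity makes the leading coefficient equal to $1$.
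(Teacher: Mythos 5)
Your proposal is correct and rests on the same essential ingredient as the paper's proof — Hensel's lemma applied to $X^n - u$ for a principal unit $u$, which is applicable because $q \equiv 1 \bmod 2n$ forces $\gcd(n,p)=1$ — but it packages the argument differently. The paper argues the two directions separately and ad hoc: in the forward direction it simply compares degrees of $c$ and $\pi_\infty^{-i}f(\pi_\infty)^n$, and in the reverse direction it manipulates the Laurent expansion of $c$ to isolate a factor of the form $\pi_\infty^{-kn}(1+X)$ and then applies Hensel to $1+X$. You instead invoke the structure theorem $K_\infty^\times \cong \pi_\infty^{\Z} \times \F_q^\times \times U^{(1)}$ up front, observe that $(K_\infty^\times)^n = \pi_\infty^{n\Z} \times (\F_q^\times)^n \times U^{(1)}$ once Hensel takes care of $U^{(1)}$, and then note that monicity forces the $\F_q^\times$ component of $c$ to be trivial. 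This settles both directions at once and has the virtue of making the role of monicity completely explicit — it is precisely what renders the $(\F_q^\times)^n$ condition vacuous — a point the paper's proof uses but does not isolate. The paper's version is more elementary and self-contained (no appeal to the unit-group decomposition); yours is more conceptual and would generalize immediately if one wanted to drop the monicity hypothesis and track the leading-coefficient condition. Your mid-proof hesitation about whether $(\F_q^\times)^n = \F_q^\times$ is resolved correctly: you only need that $1 \in (\F_q^\times)^n$, which is trivial, so the index-$n$ subgroup issue never actually bites.
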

       	            \begin{proof}
       	            Suppose that $c/\pi_\infty^{-i}\in (K_\infty^\times)^n$. Then $c/\pi_\infty^{-i}=(f(\pi_\infty))^n$, where we assume
       	            \[
       	            f(\pi_\infty)=a_{-k}\pi_\infty^{-k}+a_{-k+1}\pi_\infty^{-k+1}+\cdots+a_0+\sum_{j=1}^\infty a_j\pi_\infty^j,
       	            \]
       	            has degree $-k$. Clearing the denominator, we have $c=\pi_\infty^{-i}(f(\pi_\infty))^n$ and $\deg c\equiv i \mod n$.
	            
       	            
       	            For the converse, suppose that $\deg c=k\equiv i \mod n$, and write
       	            \begin{align*}
       	            c
       	            &=\pi_\infty^{-i-kn}+a_1\pi_\infty^{-i-kn+1} +\cdots +a_{i+kn}. 
       	            \end{align*}
       	            Then
       	            \[
       	            c/\pi_\infty^{-i}=\pi_\infty^{-kn}+a_1\pi_\infty^{-kn+1}+\cdots+a_{i+kn}\pi_\infty^{i}=\pi_\infty^{-kn}(1+X),
       	            \]
       	            where $X=a_1\pi_\infty+\cdots+a_{i+kn}\pi_\infty^{i+kn} \in (\pi_\infty)$. 
       	            Define $f(u)=u^n-(1+X) \in K_\infty^\times[u]$. Then $(1+X) \in (K_\infty^\times)^n$ if and only if $f(u)=0$ has a solution in $K_\infty^\times$. Note that $u=1$ is a solution modulo $(\pi_\infty)$, and our assumption $q \equiv 1\mod {2n}$ implies $f'(u)$ is a unit. By Hensel's Lemma, there is a unique $K_\infty^\times$ solution to $f(u)=0$. 
       	            \end{proof}
       	  
	
	Let $0 \leq i\leq n-1$ and let $(\cdot,\cdot)_\infty$ be the Hilbert symbol at infinity.
	 Define
       	                \begin{equation*}
       	                \Psi_i(c)=\left\{\begin{array}{ccl} 1 &\mbox{if } c\sim\pi_\infty^{-i}&\mbox{and } c \in \mathcal{O}_{mon},\\
       	                (c,\epsilon)_\infty^{-t} &\mbox{if } c \sim \pi_\infty^{-i} &\mbox{and } c \mbox{ has leading coefficient } \epsilon,\\
       	                 0&\mbox{otherwise.}&\end{array}\right.
       	                \end{equation*}
Then we may also obtain $\mathcal{D}(s,m;\epsilon^t,\pi_\infty^{-i})=D(s,m;\epsilon^t,i)$ from $\mathcal{D}(s,m;\epsilon^t)$ by replacing the coefficients $g_t(m,c)$ with $g_t(m,c)\Psi_i(c)$ in \eqref{eqn:kubotaFE}. We claim that the effect of including $\Psi_i$ in the coefficients is to restrict the sum to the equivalence classes $[\pi_\infty^{-i}] \in K_\infty^\times/(K_\infty^\times)^n$. To see this, 
		let $\Omega=\F_q^\times (K_\infty^\times)^n$. Note that $\Omega$ is maximal isotropic for the Hilbert symbol in the sense that for any $\varepsilon_1,\varepsilon_2 \in \Omega$, we have $(\varepsilon_1,\varepsilon_2)_\infty=1$. Define $\mathcal{M}_t(\Omega)$ as the space of functions $\Psi:K_\infty^\times \to \C$ that satisfy 
       	       	       	                \begin{equation*}
       	       	       	                \Psi(\varepsilon c)=(c,\varepsilon)_\infty^{-t}\Psi(c)
       	       	       	                \end{equation*}
       	       	      for all $\varepsilon \in \Omega$.  Then the set $\{\Psi_i:0 \leq i \leq n-1\}$ forms a basis for 
		      $\mathcal{M}_t(\Omega)$. That is, any $\Psi \in \mathcal{M}_t(\Omega)$ is completely determined by its values on a set of representatives for $K_\infty^\times/\Omega$. Thus, $\dim \mathcal{M}(\Omega) = \dim K_\infty^\times/\Omega$. Further, Lemma \ref{lem:equivalence} shows that the representatives of $K_\infty^\times/\Omega$ are exactly $\pi_\infty^{-i}$, for $i\in\{0,\ldots, n-1\}$.

\subsection{The series $\mathcal{E}(s_i;{\bf a};\bm,I_i)$}
        To obtain the functional equations for $Z^\ast(\bs;\bm)$ we consider a new single-variable series. Fix ${\bf a}=(a_1,\ldots, \hat{a}_i,\ldots,a_r) \in (\mathcal{O}_{mon})^{r-1}$, where the hat means omit $a_i$. For $0 \leq j \leq n(\alpha_i)-1$, define
        \[
        \mathcal{E}(s_i,{\bf a};\bm,\pi_\infty^{-j}):=\sum_{\stackrel{c_i \in \mathcal{O}_{mon}}{c_i \sim \pi_\infty^{-j}}} \frac{H(a_1,\ldots,c_i,\ldots,a_r;\bm)}{|c_i|^{s_i}},
        \] 
        and the normalized series
        \[\mathcal{E}^\ast(s_i,{\bf a};\bm,\pi_\infty^{-j})=(1-q^{n(\alpha_i)(1-s_i)})^{-1}\mathcal{E}(s_i,{\bf a};\bm,\pi_\infty^{-j}).\]
       A special case of Theorem 5.8 of \cite{wmnds} shows that $\mathcal{E}^\ast$ satisfies functional equations of the same form as \eqref{eqn:kubotai}. We will sketch the proof.
   		\begin{thm}{\cite[Theorem 5.8]{wmnds}}
   		\label{thm:Efe}
   		Fix ${\bf a} \in (\mathcal{O}_{mon})^{r-1}$ and let $A=\prod_{j \neq i} a_j^{-c(j,i)}$. Set $\mathcal{E}(s_i,{\bf a};\bm,j)=\mathcal{E}(s_i,{\bf a};\bm,\pi_\infty^{-j})$. Then
   		\begin{align}
   		\label{eqn:Efe}
   		\mathcal{E}^\ast(s_i,{\bf a};\bm,j)&=|Am_i|^{1-s_i}P_{j,\deg m_i}^k\mathcal{E}^\ast(2-s_i,{\bf a};\bm,j)\\
   		&+|Am_i|^{1-s_i}Q_{j,\deg m_i}^k\mathcal{E}^\ast(2-s_i,{\bf a};\bm,\deg m_i+1-j).\nonumber
   		\end{align}
   		\end{thm}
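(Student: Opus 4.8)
The plan is to deduce the functional equations for $\mathcal{E}^\ast$ from those for Kubota's Dirichlet series (equation \eqref{eqn:kubotaFE}, equivalently \eqref{eqn:kubotaFE} rewritten in terms of the $\mathcal{D}$ series) by exhibiting $\mathcal{E}(s_i,{\bf a};\bm,\pi_\infty^{-j})$ as a finite $\C$-linear combination of Kubota series $\mathcal{D}(s_i, M; \epsilon^t, \eta)$ attached to a single twist parameter $M$ depending on ${\bf a}$ and $\bm$, with coefficients that are themselves built from residue symbols. The key input is twisted multiplicativity. First I would fix the $a_j$ for $j \neq i$ and use the twisted multiplicativity relations from Section \ref{subsec:gsc} to expand $H(a_1,\ldots,c_i,\ldots,a_r;\bm)$: write $c_i = d \cdot c'$ where $d$ collects all prime factors of $c_i$ shared with $\prod_{j \neq i} a_j$ and $m_i$, and $c'$ is coprime to everything. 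The $c'$-part of the coefficient will, up to residue symbols arising from $\varphi$ and the $\bm$-multiplicativity, be exactly a Gauss sum $g_{\|\alpha_i\|^2}(M, c')$ for an appropriate twist $M$ determined by $A m_i$ (this is where the factor $|Am_i|^{1-s_i}$ and the exponent $\|\alpha_i\|^2$ on $P,Q$ enter — note $Q^t$ carries $\tau(\epsilon^{t(2i-j-1)})$, matching $g^\ast$ with $t = \|\alpha_i\|^2$).

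Next I would organize the sum over $c_i \sim \pi_\infty^{-j}$ according to the finitely many possibilities for the "bad" part $d$, so that $\mathcal{E}(s_i,{\bf a};\bm,\pi_\infty^{-j})$ becomes a finite sum, indexed by $d$, of shifted Kubota series in the variable $c' = c_i/d$. Because $\deg c_i \equiv \deg d + \deg c' \pmod{n(\alpha_i)}$, fixing the equivalence class of $c_i$ fixes the class of $c'$ (using Lemma \ref{lem:equivalence} and the fact that $\mu_n \subset \F_q$ makes leading-coefficient twists trivial for the Hilbert symbol via the maximal isotropic subgroup $\Omega$), so each term is of the form $(\text{residue symbol constant}) \cdot |d|^{-s_i} \cdot \mathcal{D}(s_i, M; \epsilon^{\|\alpha_i\|^2}, \pi_\infty^{-j'})$ with $j'$ depending on $\deg d$ and $j$. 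Applying the Kubota functional equation \eqref{eqn:kubotaFE} to each term — and checking that the $|d|^{-s_i}$ factors recombine with $|d|^{1-s_i}$ coming from $|M|^{1-s_i}$ and that the indices $j'$ transform compatibly with $j \mapsto \deg m_i + 1 - j$ — reassembles the right-hand side into $\mathcal{E}^\ast(2-s_i,\ldots)$ terms. The normalizing factors $(1-q^{n(\alpha_i)(1-s_i)})^{-1}$ versus $(1-q^{n-ns})^{-1}$ are matched by the same residue-class bookkeeping, since the $c_i$-sum is constrained mod $n(\alpha_i)$ rather than mod $n$; here one uses that $g_t(m,c)$ with $t = \|\alpha_i\|^2$ effectively only sees residues mod $n(\alpha_i) = n/\gcd(n,\|\alpha_i\|^2)$, which is exactly why the remark "P and Q depend only on $2i-j \bmod n$" after \eqref{eqn:kubotaFE} is the relevant reduction.

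The main obstacle I anticipate is the careful bookkeeping of the residue-symbol prefactors: one must verify that the constants produced by twisted multiplicativity (the $\varphi({\bf c};{\bf c}')$ factors and the $\bm$-twist symbols $\leg{m_j'}{c_j}^{-\|\alpha_j\|^2}$) conspire so that, after applying the rank-one functional equation termwise and summing over the bad part $d$, they reassemble into a \emph{single} clean factor $|Am_i|^{1-s_i}P_{j,\deg m_i}^{\|\alpha_i\|^2}$ (resp. $Q$) independent of $d$. This requires the $n$th power reciprocity law (stated in Section \ref{sec:prelim}, using $q \equiv 1 \bmod 2n$) to flip and cancel symbols, together with the Gauss-sum identities \eqref{eqn:gauss} and property (4) to handle ramified primes $d$. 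The rest — the analytic continuation being automatic since everything is a finite rational combination, and the precise form of $P,Q$ — follows from \cite[Theorem 5.8]{wmnds} once the reduction to Kubota's series is in place; I would cite that theorem for the clean statement and only sketch the rational-function-field-specific pieces (Lemma \ref{lem:equivalence} and the $\Omega$-isotropy computation) in detail.
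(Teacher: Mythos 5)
Your overall strategy — use twisted multiplicativity to peel off a Gauss sum in the ``good'' part of $c_i$, reduce to Kubota's Dirichlet series, and import its functional equation — is the right skeleton and matches the paper's. But there is a genuine gap in the middle step. You claim that decomposing $c_i = d\cdot c'$ (with $d$ supported on the fixed finite prime set $S$ and $c'$ coprime to $S$) makes $\mathcal{E}$ a \emph{finite} linear combination of Kubota series, indexed by $d$. It does not: $d$ ranges over all monics supported on $S$, which is infinite. The paper handles precisely this by introducing the $P$-power generating functions
\[
N^{(P;{\bf k})}(x;\bm,\alpha_i)\quad\text{and}\quad f^{(P;{\bf k})}(x;\bm,\alpha_i),
\]
which repackage the infinite sum over the ``bad'' exponents into rational functions, and then obtaining a finite decomposition \eqref{eqn:Erewrite} indexed by residue classes $(k_1,\ldots,k_v)\in\{0,\ldots,n-1\}^v$, where each summand is a product of a single Kubota series $\mathcal{D}(\cdot)$ and the rational factors $\prod_i f^{(P_i;\cdot)}$. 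Your account has no analogue of the $f^{(P;{\bf k})}$'s at all, so the object you end up with is not a finite combination of Kubota series alone and the subsequent ``apply Kubota's FE termwise and reassemble'' step cannot be carried out as stated.

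This gap propagates: the functional equation for $\mathcal{E}^\ast$ is \emph{not} a consequence of the Kubota FE by itself. One also needs the local functional equation \eqref{eqn:ffe} that the $f^{(P;{\bf k})}$'s satisfy under $s\mapsto 2-s$, plus the residue-symbol identity \eqref{eqn:Cfe} (Lemma 5.9 of \cite{wmnds}) to show the local and Kubota contributions cohere, plus the prime-removal identity \eqref{eqn:kubotaS} to pass from the $S$-restricted Kubota series $\mathcal{D}_S$ to the genuine $\mathcal{D}$. You lump all of this under ``residue-symbol bookkeeping,'' but the $f$-functional-equation is a separate structural input, not a matter of symbol-flipping via reciprocity. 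What you did correctly anticipate: the role of twisted multiplicativity in isolating a Gauss-sum factor in the unramified part, the use of Lemma \ref{lem:equivalence} and the $\Omega$-isotropy to index residue classes at infinity, and the final congruence computation showing $P$ and $Q$ depend only on $2j-\deg Am_i \bmod n(\alpha_i)$ (the paper's $2i-2f-e$ calculation), so that they come out of the sum over the finite index set.
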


   		   		\begin{proof}
   		Let $P$ be prime and ${\bf k}=(k_1,\ldots,k_r)$ an $r$-tuple of nonnegative integers. Recall that $i \in \{1,\ldots, r\}$ is fixed. Set $l_i=\ord_Pm_i$ and $n=n(\alpha_i)$. Writing $\beta=\sum k_j \alpha_j$, define a new tuple ${\bf k}'$ by setting $\sum k_j'\alpha_j=\sigma_i\bullet\beta$. Let
   		   		   		   		   		 \begin{align*}
   		   		   		   		   		 N^{(P;{\bf k})}(x;\bm,\alpha_i)&=\sum_{j \geq 0} H(P^{k_1},\ldots, P^{jn+(k_i)_{n}},\ldots,P^{k_r};\bm)x^{jn+(k_i)_{n}}.\\
   		   		   		   		   		 \intertext{Then define}
   		   		   		   		 f^{(P;{\bf k})}(x;\bm,\alpha_i)&=\frac{N^{(P;{\bf k})}(x;\bm,\alpha_i)}{1-p^{n-1}x^n}\\
   		   		   		   		 &-\delta_{k_i,k_i'}^mg(m_i^{-1}P^{l_i},P;\epsilon^{\|\alpha_i\|^2(k_i-k_i')})p^{(k_i-k_i'-1)_n}x^{(k_i-k_i')_n}\frac{N^{(P;{\bf k}')}(x;\bm,\alpha_i)}{1-p^{n-1}x^n}
   		   		   		   		 \end{align*}
   		   		   		   		 where $\delta_{i,j}^m$ is 0 if $i \equiv j \mod m$ and 1 otherwise. These functions satisfy the following functional equations \cite[Theorem 4.1]{wmnds}:
   		   		\begin{equation}
   		   		\label{eqn:ffe}
   		   		\frac{f^{(P;{\bf k})}(x;\bm,\alpha_i)}{f^{(P;{\bf k})}(1/(p^2x);\bm,\alpha_i)}=\left\{\begin{array}{cc} (px)^{l_i+1-(k_i'-k_i)_{n}} &\mbox{if } (k_i'-k_i)_n \neq 0,\\
   		   		(px)^{l_i+1-n} &\mbox{otherwise}.\end{array}\right.
   		   		\end{equation}
				
				We will write $\mathcal{E}^\ast(s_i,{\bf a};\bm,j)$ in terms of Kubota series $D(s_i,m;\epsilon^t,j)$ and $f^{(P;{\bf k})}(x;\bm,\alpha_i)$. 
   		To simplify notation, assume that $i=1$. Let $P_1,\ldots,P_v$ be the prime divisors of $a_2\cdots a_r m_1\cdots m_r$, with $p_j=|P_j|$. Let $S=\{P_1,\ldots,P_v\}$. Write $a_j=P^{\beta_{j1}}\cdots P^{\beta_{jv}}$ for $j=2,\ldots, r$ and $Am_1=P^{l_1}\cdots P^{l_r}$. One can show that
   		\begin{align}
   		\label{eqn:Erewrite}
   		\mathcal{E}(s_1,{\bf a};\bm,j)&=\xi \sum_{k_1,\ldots,k_v=0}^{m-1}\mathcal{D}(s_1,P^{(l_1-2k_1)_n}\cdots P_v^{(l_v-2k_v)_n},\epsilon^{\|\alpha_1\|},P_1^{-k_1}\cdots P_v^{-k_v}\pi_\infty^{-j})\\\nonumber
   		&\times C(k_1,\ldots, k_r)\prod_{i=1}^v f^{(P_i;k_i,b_{21},\ldots,b_{ri})}(p_i^{-s_1};\bm,\alpha_1),
   		\end{align}
   		where $\xi$ and $C(k_1,\ldots,k_r)$ are products of residue symbols. In particular, for
   		   		\(\eta' \sim P^{2k_1-l_1-1}\) and $K_j=l_j-2k_j$, we have \cite[Lemma 5.9]{wmnds}
   		   		\begin{align}
   		   		\label{eqn:Cfe}
   		   		&\frac{C(k_1,\ldots,k_r){\Psi}_{I_1}^{(P_1^{k_1}\cdots P_v^{k_v})}}{C(l_1-k_1+1,\ldots, l_r-k_r+1)\hat{ \Psi}_{{\eta'}}^{(P_1^{(l_1+1-k_1)_n}\cdots P_v^{(l_v+1-k_v)_n})}}=\leg{m_1P_2^{K_2}\cdots P_v^{K_v}}{P_1^{2k_1-l_1-1}}^{-\|\alpha_1\|^2},
   		   		\end{align}
   		 where we define $\Psi^{(a)}(c)=\Psi(ac)$ and $\hat{\Psi}_\eta(c)=(\eta,c)_\infty^t\Psi(\eta c)$.
It is clear that $\hat{\Psi}_\eta\in\mathcal{M}_t(\Omega)$ and depends only on the class of $\eta \in K_\infty^\times/(K_\infty^\times)^n$.
   		   				   	
   		 The proof of \eqref{eqn:Erewrite} is a lengthy, but straightforward, computation with residue symbols. The idea is to use twisted multiplicativity to rewrite the coefficients $H(c_1,a_2,\ldots,a_r)$ in terms of Gauss sums and prime power coefficients. This follows from considering $c_1=cc'$, where we assume that $(c,a_2\cdots a_r m_1\cdots m_r)=1$. Summing all relevant $c$, up to a product of residue symbols we can write $\mathcal{E}(s_1,{\bf a};\bm,j)$ as the sum of the product of Kubota series of the form $\mathcal{D}_S(s_1,m;\epsilon^{\|\alpha_1\|^2},\eta)$ and polynomials $N^{(P;{\bf k})}(p_i^{-s_1};\bm,\alpha_1)$. Here $\mathcal{D}_S(s,m;\epsilon^t,\eta)$ is a generalization of \eqref{eqn:kubotaD}; for a finite set of primes $S$, it is defined exactly the same as \eqref{eqn:kubotaD} except that we restrict the sum to those $c$ relatively prime to the elements of $S$.
   		 
   		 To obtain $\mathcal{D}(s_1,m;\epsilon^{\|\alpha_1\|^2},\eta)$ from  $\mathcal{D}_S(s_1,m;\epsilon^{\|\alpha_1\|^2},\eta)$, we use the following result of \cite{patterson} (see also \cite[Lemma 5.4]{wmnds}) to ``remove'' primes from $S$ one at a time.
   		 	\begin{align}
   		 	\label{eqn:kubotaS}
   		   	D_{S\cup\{P\}}(s,mP^i;\epsilon^t,\pi_\infty^{-j})&=\frac{D_S(s,mP^i;\epsilon^t,\pi_\infty^{-j})}{1-p^{n-1-ns}}\\
   		    &-\frac{g(mP^i,P^{i+1};\epsilon^t)}{p^{(i+1)s}}\frac{D_S(s,mP^{n-i-2};\epsilon^t,\pi_\infty^{-i-j-1})}{1-p^{n-1-ns}},\nonumber
   		    \end{align}
   		 Making a change of variables and applying \eqref{eqn:Cfe}, we put the two terms on the left-hand side of \eqref{eqn:kubotaS} together to obtain \eqref{eqn:Erewrite} after $v$ iterations.
   		 
   		It remains to see that \eqref{eqn:Erewrite} satisfies \eqref{eqn:Efe}. For this, we first apply the functional equations of $D^\ast$. Assume ${\bf k}=(k_1,\ldots, k_r)$, where each $k_j \in \{0,\ldots, n(\alpha_1)-1\}$, and let 
   		\begin{align*}
   		E&=E({\bf k})=P^{(l_1-2k_1)_n}\cdots P_v^{(l_v-2k_v)_m}\\
   		F&=F({\bf k})=P^{-k_1}\cdots P_v^{-k_v}.
   		\end{align*} 
   		Set $e=\deg E$ and $f=\deg F$. It follows from \eqref{eqn:kubotaFE} that
   		\begin{align*}
   		D^\ast(s_1,E;\epsilon^{\|\alpha_1\|^2},i-f)&=|E|^{1-s_1}P_{i-f,e}^{\|\alpha_1\|^2}(s_1)D^\ast(2-s_1,E;\epsilon^{\|\alpha_1\|^2},i-f)\\
   		&+Q_{i-f,e}^{\|\alpha_1\|^2}(s_1)D^\ast(2-s_1,E;\epsilon^{\|\alpha_1\|^2},e+1-(i-f)).
   		\end{align*}
   		Recall that $P_{i,j}^{\|\alpha_1\|^2}$ and $Q_{i,j}^{\|\alpha_1\|^2}$ depend only on the value of $2i-j$ modulo $n(\alpha_1)$. We have
   		\begin{align*}
   		2i-2f-e&=2i-\sum_{j=1}^v\deg P_j(-2k_j-(l_j-2k_j)_n)\\
   		&\equiv 2i -n+\sum_{j=1}^v \deg P_j (l_j)_n\\
   		&\equiv 2i-\deg Am_1 \mod n.
   		\end{align*}
   		Therefore $P_{i-f,e}(s_1)=P_{i,\deg Am_1}(s_1)$ and $Q_{i-f,e}(s_1)=Q_{i,\deg Am_1}(s_1)$ do not depend on ${\bf k}$.
   		The result now follows from \cite{wmnds} using \eqref{eqn:ffe}.
   		\end{proof}
	
   		\subsection{Functional Equations for $Z^\ast(\bs;\bm)$}
   		We now derive the functional equations for $Z^\ast(\bs;\bm)$. Fix a simple reflection $\sigma_i \in W$ and let $n=n(\alpha_i)$. We have the following decomposition:
   		\begin{align}
   		\label{eqn:z1st}
   		Z^\ast(\bs;\bm,I)=&\Xi(\bs)\sum_{\substack{\bc=(\mathcal{O}_{mon})^r\\c_j \equiv I_j\mod{n(\alpha_j)}}} \frac{H(c_1,\ldots,c_i,\ldots, c_r;\bm)}{|c_1|^{s_1}\cdots |c_r|^{s_r}}\\
   		=&\frac{\Xi(\bs)}{\zeta(ns_i-n+1)}\sum_{\substack{\bc=(\mathcal{O}_{mon})^{r-1}\\c_j \equiv I_j\mod{n(\alpha_j)}}}\frac{1}{\prod_{j=1, j\neq i}^r |c_j|^{s_j}}\mathcal{E}^\ast(s_i,\hat{\bc}_i;\bm,I_i).\nonumber
   		\end{align}
   		Let $C=\prod_{j\neq i}c_j^{c(j,i)}$. Applying \eqref{eqn:Efe} to $\mathcal{E}^\ast(s_i,\hat{\bc}_i;\bm,I_i,i)$, we see \eqref{eqn:z1st} equals
   		\begin{align}
   		\label{eqn:z2nd}
   		&\frac{\Xi(\bs)}{\zeta(ns_i-n+1)}\sum_{\stackrel{\bc=(\mathcal{O}_{mon})^{r-1}}{c_j \equiv I_j\mod n(\alpha_j)}}\frac{1}{\prod_{j=1, j\neq i}^r |c_j|^{s_j}}|Cm_i|^{1-s_i}\\
   		&\times\bigg(P_{I_i,\deg{Cm_i}}^{\|\alpha_i\|^2}(s_i)\mathcal{E}^\ast(2-s_i,\hat{\bc}_i;\bm,I_i)+Q_{I_i,\deg{Cm_i}}^{\|\alpha_i\|^2}(s_i)\mathcal{E}^\ast(2-s_i,\hat{\bc}_i;\bm,\deg{Cm_i}+1-I_i)\bigg).\nonumber
   		\end{align}
   		Substituting $\deg{Cm_i}=\deg m_i-\sum_{j \neq i}c(j,i)\deg c_j$ into \eqref{eqn:z2nd}, we have
   		\begin{align*}
   		&\frac{\Xi(\bs)}{\zeta(ns_i-n+1)}|m_i|^{1-s_i}\sum_{\stackrel{\bc=(\mathcal{O}_{mon})^{r-1}}{c_j \equiv I_j\mod n(\alpha_j)}}\frac{1}{\prod_{j=1, j\neq i}^r |c_j|^{s_j-c(j,i)(s_i-1)}}\\
   		&\times\bigg(P_{I_i,\deg m_i-\sum_{j \neq i}c(j,i)I_j}^{\|\alpha_i\|^2}(s_i)\mathcal{E}^\ast(2-s_i,\hat{\bc}_i;\bm,I_i)\\
   		&+Q_{I_i,\deg m_i-\sum_{j \neq i}c(j,i)I_j}^{\|\alpha_i\|^2}(s_i)\mathcal{E}^\ast(2-s_i,\hat{\bc}_i;\bm,\deg m_i-\sum_{j \neq i}c(j,i)I_j+1-I_i)\bigg).
   		\end{align*}
Recall that $\sigma_i$ permutes the positive roots of $\Phi$ other than $\alpha_i$. It follows that  \[\frac{\Xi(\bs)}{\zeta(ns_i-n+1)}=\frac{\Xi(\sigma_i\bs)}{\zeta(n(2-s_i)-n+1)}.\] Thus we have achieved the desired result.
   		
   		It will be convenient to express \eqref{eqn:globfe} in a slightly different way. Summing $Z^\ast(\bs;\bm,I)$ over all $I$, we have
   		\begin{equation}
		\label{eqn:globfe2}
		Z^\ast(\bs;\bm)=|m_i|^{1-s_i}\sum_{I}\bigg(P_{I_i,\deg m_i-\sum_{j \neq i}c(j,i)I_j}^{\|\alpha_i\|^2}+Q_{(\sigma_i\bullet I)_i,\deg m_i-\sum_{j\neq i} c(j,i)I_j}^{\|\alpha_i\|^2}\bigg)Z^\ast(\bs;\bm;I).
   		\end{equation}
		We remark on the similarity between the $p$-part functional equations \eqref{eqn:ppfe} and the global functional equations \eqref{eqn:globfe2}.
		
		
\section{The Support of $Z^\ast(\bs;\bm)$}
\label{sec:global}
In this section we prove the rationality of and describe the support of $Z^\ast(\bs;\bm)$. Let $X_i=q^{-s_i}$ and $\X=(X_1,\ldots,X_r)$. Under this identification, we put $\mathcal{Z}^\ast(\X;\bm)=Z^\ast(\bs;\bm)$.

\subsection{Rationality of $Z^*(\X;\bm)$}
The key to understanding the relationship between the global series and its $p$-parts is to note that both $F(\x;\ell)$ and $\mathcal{Z}^\ast(\X;\bm)$ are rational functions that, up to a change of variables, satisfy the same functional equations. Thus, we first show that $\mathcal{Z}^\ast(\X;\bm)$ is indeed a rational function.  

\begin{thm}
\label{thm:rat}Write $\sum_{\alpha \in \Phi^+} \alpha=\sum_{i=1}^r \rho_i \alpha_i$ so that $\rho_i$ is the coefficient of $\alpha_i$ in the sum of positive roots of $\Phi$. For a fixed choice of reduced expression $w_0=\sigma_{i_t}\sigma_{i_{t-1}}\cdots\sigma_{i_1}$, let $\beta_{j}$ for $j=1,\ldots, t$ be the positive root corresponding to $\sigma_{i_j}$. That is, $\beta_{j}= \sigma_{i_t}\sigma_{i_{t-1}}\cdots\sigma_{i_{j+1}}(\alpha_{i_j})$ cf.~\cite[VI: Corollary 2 to Proposition 17]{bourbaki}. For fixed $l$, write $\sum_{i_j=l} \beta_l=\sum_{k=1}^r c_{l_k}\alpha_k$.

Finally, set $D(\X)=\prod_{\alpha>0} (1-q^{n(\alpha)d(\alpha)+1}\X^{n(\alpha)\alpha})$. Then $\mathcal{N}(\X;\bm)=D(\X)\mathcal{Z}^\ast(\X;\bm)$ is polynomial in $X_1,\ldots, X_r$ of degree at most $\rho_i+\sum_{k=1}^r c_{k_i}\deg m_k$ in each $X_i$.
\end{thm}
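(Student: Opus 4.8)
The plan is to prove both assertions — that $\mathcal N(\X;\bm)=D(\X)\mathcal Z^\ast(\X;\bm)$ is a polynomial (equivalently, that $\mathcal Z^\ast$ is rational with denominator dividing $D(\X)$) and that its degree in each $X_i$ is at most $\rho_i+\sum_k c_{k_i}\deg m_k$ — by iterating the global functional equations \eqref{eqn:globfe} along the fixed reduced word for the long element $w_0$ and reading off the rational prefactor that accumulates.

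For the groundwork and rationality: by the twisted multiplicativity of Section~\ref{subsec:gsc} the coefficient $H(\bc;\bm)$ is a product of residue symbols and prime-power coefficients $H(P^{k_1},\dots,P^{k_r};P^{l_1},\dots,P^{l_r})$, and the latter are entries of the $p$-parts $N(\x;\ell)$, which by Theorem~\ref{thm:support} are supported on a fixed polytope and, by the bound $|g^\ast_t(P)|\le 1$ coming from \eqref{gdef} and \eqref{eqn:gauss}, have size bounded polynomially in $p$. This yields a polynomial bound on $|H(\bc;\bm)|$ in $|c_1|,\dots,|c_r|$, so $\mathcal Z^\ast(\X;\bm)=\Xi(\bs)Z(\bs;\bm)$ is a power series in $X_1,\dots,X_r$ converging on a polydisc about the origin. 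Rationality then follows in the usual way: the simple-reflection functional equations \eqref{eqn:globfe} generate an action of $W$, and by Bochner's tube theorem applied to this polydisc together with its $W$-translates — exactly as in the number-field argument of \cite{wmnds} — $\mathcal Z^\ast$ continues to a rational function whose polar divisor lies in the $W$-orbit of $\{\,1-q^{n(\alpha_i)+1}X_i^{n(\alpha_i)}=0\,\}$. Since $n(\cdot)$ is $W$-invariant on roots and the change of variables \eqref{eqn:cov} sends $\X^{n(\alpha_i)\alpha_i}$ to $q^{\,n(\alpha_i)(d(w^{-1}\alpha_i)-1)}\X^{n(\alpha_i)w^{-1}\alpha_i}$, each such translate is $\{\,1-q^{n(\alpha)d(\alpha)+1}\X^{n(\alpha)\alpha}=0\,\}$ for some $\alpha\in\Phi$; those with $\alpha>0$ are precisely the factors of $D(\X)$, while those with $\alpha<0$ do not in fact contribute a pole — the global shadow of Theorem~\ref{thm:gap}. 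Hence $D(\X)$ clears the denominator and $\mathcal N=D\mathcal Z^\ast$ is a polynomial.

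For the degree bound, fix the reduced word $w_0=\sigma_{i_t}\cdots\sigma_{i_1}$ and iterate \eqref{eqn:globfe}, applying the reflections in the order $\sigma_{i_t},\sigma_{i_{t-1}},\dots,\sigma_{i_1}$; this writes $\mathcal Z^\ast(\X;\bm)=\sum_{I'}R_{I'}(\X)\,\mathcal Z^\ast(w_0\X;\bm,I')$ as a finite linear combination over congruence data, where each prefactor $R_{I'}$ is a product over $j=1,\dots,t$ of a monomial $|m_{i_j}|^{1-s_{i_j}}$ and a factor $P$ or $Q$ of the form \eqref{eqn:PQ}, all evaluated at the variables reached after applying $\sigma_{i_t},\dots,\sigma_{i_{j+1}}$. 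By the transformation rule $f_\beta(w\x)=p^{d(w^{-1}\beta-\beta)}\x^{w^{-1}\beta}$, at the step that uses $\sigma_{i_j}$ the relevant variable equals $q^{d(\beta_j)-1}\X^{\beta_j}$ with $\beta_j=\sigma_{i_t}\cdots\sigma_{i_{j+1}}(\alpha_{i_j})$. Write $[\beta:\alpha_i]$ for the coefficient of $\alpha_i$ when $\beta$ is expanded in simple roots. Then: the denominator contributed at that step is $1-q^{n(\beta_j)d(\beta_j)+1}\X^{n(\beta_j)\beta_j}$, and since $\{\beta_1,\dots,\beta_t\}=\Phi^+$ these multiply to exactly $D(\X)$ for every $I'$; the numerator of $P$ or $Q$ has degree at most $1$ in that variable, contributing at most $[\beta_j:\alpha_i]$ to the $X_i$-degree, for a total of at most $\sum_j[\beta_j:\alpha_i]=\rho_i$; and $|m_{i_j}|^{1-s_{i_j}}=q^{\deg m_{i_j}\,d(\beta_j)}(\X^{\beta_j})^{\deg m_{i_j}}$ contributes $\deg m_{i_j}[\beta_j:\alpha_i]$, for a total $\sum_k\deg m_k\,[\sum_{i_j=k}\beta_j:\alpha_i]=\sum_k c_{k_i}\deg m_k$. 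Finally $(w_0\X)_i$ is a scalar multiple of $X_j^{-1}$ for a suitable $j$, so each $\mathcal Z^\ast(w_0\X;\bm,I')$ is a convergent power series in $X_1^{-1},\dots,X_r^{-1}$ near infinity; multiplying the displayed identity by $D(\X)$ and combining this with the polynomiality of $\mathcal N$ already established forces $\deg_{X_i}\mathcal N\le\rho_i+\sum_k c_{k_i}\deg m_k$.

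The main obstacle is the bookkeeping in the preceding paragraph: keeping the affine $W$-action on $\bs$ — equivalently the change of variables on $\X$ — synchronized with the root data $\beta_j$ through $t$ iterations, so that the weight acquired at the $\sigma_{i_j}$-step is $\beta_j$ and not $-w_0\beta_j$ (the order in which the reflections are applied is essential here), and handling the numerators $(qX_k)^{1-(j+1-2i)_{n(\alpha_k)}}$ of the $P$-factors, which may be Laurent monomials of negative degree, so that the accumulated count still bounds the polynomial degree from above. A secondary technical point is making rigorous the passage between the expansion of $\mathcal Z^\ast$ at the origin and the expansions of the $\mathcal Z^\ast(w_0\X;\bm,I')$ at infinity — that is, pinning down the ``global shadow of Theorem~\ref{thm:gap}'' which excludes the spurious denominators $1-q^{n(\alpha)d(\alpha)-1}\X^{n(\alpha)\alpha}$, either directly or by invoking the $\mathcal E$-series reduction to Kubota series from Section~\ref{sec:GFE}.
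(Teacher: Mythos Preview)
Your approach is essentially the paper's: first establish that $\mathcal N$ is entire (the paper does this by citing \cite[Theorem~6.1]{wmnds} for the polar structure of $\mathcal Z^\ast$ and arguments as in \cite[Theorem~2]{wmds1} for entireness, rather than your more explicit Bochner sketch), then iterate the functional equations \eqref{eqn:globfe} along a reduced word for $w_0$, package them in matrix form $\vec{\mathcal Z}^\ast(\X)=A_{w_0}(\X)\vec{\mathcal Z}^\ast(w_0\X)$, and bound the growth of $D(\X)A_{w_0}(\X)$ as $X_i\to\infty$ to read off the degree.

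The one structural difference is that the paper splits into two cases according to whether $w_0=-1$. When $w_0=-1$ (types $A_1$, $B_r$, $C_r$, $D_{2k}$, $E_7$, $E_8$, $F_4$, $G_2$) the iterated functional equation lands at $\vec{\mathcal Z}^\ast(\tfrac{1}{q^2X_1},\dots,\tfrac{1}{q^2X_r})$ directly; when $w_0\neq -1$ (types $A_r$ for $r\ge 2$, $D_{2k+1}$, $E_6$) the paper composes with an additional functional equation coming from the nontrivial Dynkin diagram automorphism (a permutation matrix $B$) to force the argument literally into the form $\X\mapsto\tfrac{1}{q^2\X}$ before running the same estimate. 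Your observation that $(w_0\X)_i$ is already a scalar multiple of some $X_j^{-1}$, so that $\mathcal Z^\ast(w_0\X;\bm,I')$ is bounded as any $X_i\to\infty$ regardless, handles both cases at once and avoids the split --- a mild simplification. Your two acknowledged obstacles (tracking the Laurent exponents in the $P$-numerators through the iteration, and excluding the spurious $\alpha<0$ poles) are precisely the places where the paper leans on the cited references rather than arguing from scratch.
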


Our proof requires the following root-theoretic statement.

\begin{thm}
\label{thm:posroots}
Fix a reduced expression $w_0=\sigma_{i_t}\sigma_{i_{t-1}}\cdots\sigma_{i_1}$, where necessarily $t=\#\Phi^+$. Let $\beta_{j}$ for $j=1,\ldots, t$ be the positive roots corresponding to $\sigma_{i_j}$. That is, $\beta_{j}= \sigma_{i_t}\sigma_{i_{t-1}}\cdots\sigma_{i_{j+1}}(\alpha_{i_j})$ cf.~\cite[VI: Corollary 2 to Proposition 17]{bourbaki}. Then 
\begin{equation}
\label{eqn:posrooteq}
\sum_{i_j=k} \beta_k=\varpi_k-w_0\varpi_k.
\end{equation}
\end{thm}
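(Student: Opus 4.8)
The plan is to prove \eqref{eqn:posrooteq} by a telescoping argument along the fixed reduced word, using nothing beyond the defining property $\langle \varpi_k,\alpha_i\rangle=\delta_{ki}$ of the fundamental weights. First I would introduce the partial products $u_j:=\sigma_{i_t}\sigma_{i_{t-1}}\cdots\sigma_{i_{j+1}}$ for $j=0,1,\ldots,t$, with the convention that $u_t$ is the identity and $u_0=w_0$. By construction $\beta_j=u_j(\alpha_{i_j})$ and $u_{j-1}=u_j\sigma_{i_j}$, so the roots attached to the reduced expression are precisely the images of the simple roots under these partial products.

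Next I would telescope
\[
\varpi_k-w_0\varpi_k \;=\; u_t\varpi_k-u_0\varpi_k \;=\; \sum_{j=1}^{t}\bigl(u_j\varpi_k-u_{j-1}\varpi_k\bigr),
\]
and evaluate each summand. Applying the reflection formula $\sigma_{i_j}(\varpi_k)=\varpi_k-\langle\varpi_k,\alpha_{i_j}\rangle\alpha_{i_j}=\varpi_k-\delta_{k,i_j}\alpha_{i_j}$ and then $u_j$ gives $u_{j-1}\varpi_k=u_j\sigma_{i_j}\varpi_k=u_j\varpi_k-\delta_{k,i_j}\,u_j(\alpha_{i_j})=u_j\varpi_k-\delta_{k,i_j}\beta_j$. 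Hence $u_j\varpi_k-u_{j-1}\varpi_k=\delta_{k,i_j}\beta_j$, and summing over $j$ collapses the telescope to $\varpi_k-w_0\varpi_k=\sum_{j:\,i_j=k}\beta_j$, which is \eqref{eqn:posrooteq} once one reads $\sum_{i_j=k}\beta_k$ as the (slightly abusive) shorthand for the sum of the $\beta_j$ over the positions $j$ with $i_j=k$.

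I do not expect a serious obstacle here: the argument is essentially a one-line telescoping once the partial products are set up, and is purely formal in the lattice — in particular the cited Bourbaki fact is not needed for the identity itself (its role is only to guarantee, elsewhere, that $\{\beta_1,\ldots,\beta_t\}$ enumerates $\Phi^+$ with each root occurring once, which is what allows Theorem \ref{thm:rat} to identify $\sum_{\alpha>0}\alpha=\sum_k(\varpi_k-w_0\varpi_k)$ with $\sum_k\rho_k\alpha_k$). The only point requiring care is the reverse-order indexing $w_0=\sigma_{i_t}\cdots\sigma_{i_1}$: one must form the partial products from the left, so that $u_t=e$, $u_0=w_0$, and $u_{j-1}=u_j\sigma_{i_j}$ all hold with the correct placement, matching the convention $\beta_j=\sigma_{i_t}\cdots\sigma_{i_{j+1}}(\alpha_{i_j})$ used in the statement. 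A sanity check is that $-w_0$ sends dominant weights to dominant weights, so $\varpi_k-w_0\varpi_k$ is a nonnegative combination of simple roots, consistent with its being a sum of positive roots $\beta_j$.
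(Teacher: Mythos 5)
Your telescoping argument is correct and is a genuinely different --- and in fact cleaner and more general --- route than the paper's. The paper proves the identity by case analysis: for the classical types $A_r$, $B_r$, $C_r$, $D_r$ it fixes Littelmann's ``good'' reduced decomposition of $w_0$, writes down the resulting explicit enumeration $\beta_1,\ldots,\beta_t$ of $\Phi^+$, and verifies the formula by direct inspection of which $\beta_j$ are attached to each simple reflection; for the exceptional types $G_2$, $F_4$, $E_6$, $E_7$, $E_8$ it resorts to computer computation with LiE. Your proof is a uniform, type-independent telescoping using nothing beyond $\sigma_{i_j}\varpi_k=\varpi_k-\delta_{k,i_j}\alpha_{i_j}$ and the definition $\beta_j=u_j(\alpha_{i_j})$, $u_{j-1}=u_j\sigma_{i_j}$. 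This buys two things the paper's proof does not: it needs no case distinction or computer assistance, and it establishes the identity for \emph{every} reduced expression of $w_0$, not just one favoured decomposition per type. That second point is actually relied on later --- the proof of Theorem~\ref{thm:rat} asserts that the degree bound ``is independent of our choice of reduced expression for $w_0$ by Theorem~\ref{thm:posroots}'' --- and your argument justifies that independence directly, whereas a per-decomposition verification leaves it implicit. Your reading of the (admittedly abusive) left-hand side $\sum_{i_j=k}\beta_k$ as $\sum_{j:\,i_j=k}\beta_j$ is the intended one.
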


\begin{rem*} If $w_0=-1$, then the right-hand side of \eqref{eqn:posrooteq} becomes $2\varpi_k$. If we sum \eqref{eqn:posrooteq} over all $k$, we get the two standard expressions for $2\rho$:  $\sum_{\alpha \in \Phi^+} \alpha=2\sum_{i=1}^r \varpi_i$. Thus Theorem \ref{thm:posroots} can be seen as a refinement of the two different ways of computing $2\rho$. \end{rem*}

\begin{proof}[Proof of Theorem \ref{thm:posroots}]
Our proof of Theorem \ref{thm:posroots} for the classical cases utilizes a specific {\em good} choice of reduced decomposition for $w_0$ due to Littelmann \cite{littelmann}. The exceptional cases $G_2$, $F_4$, $E_6$, $E_7$, and $E_8$ were checked via computer computation with LiE. In what follows, $(\epsilon_1,\ldots, \epsilon_r)$ is the canonical basis on $\R^r$ (see \cite[VI, 4.]{bourbaki}). Below we use nonstandard terminology and refer to the positive roots $\beta_k$ corresponding to the simple reflections $\sigma_k$ in the reduced expression  for $w_0$ as {\em $k$-roots}. 

Let $\Phi=A_r$. The simple roots are $\alpha_1=\epsilon_1-\epsilon_2,\ldots,\alpha_r=\epsilon_r-\epsilon_{r+1}$. For $A_1$, $w_0=-1$ and \eqref{eqn:posrooteq} is easily verified. For $r \geq 2$, $w_0$ is the automorphism of $\Phi$ that transforms $\alpha_i$ to $-\alpha_{r+1-i}$. In this case the fundamental weights are $\varpi_i=\epsilon_1+\cdots+\epsilon_i-\frac{i}{r+1}\sum_{j=1}^{r+1}\epsilon_j$ (see for example, \cite[Plate I]{bourbaki}). Following \cite{littelmann}, the good decomposition of the long word is 
\[w_0=\sigma_1(\sigma_2\sigma_1)(\sigma_3\sigma_2\sigma_1)(\ldots)(\sigma_r\sigma_{r-1}\cdots \sigma_2\sigma_1),\]
and the corresponding enumeration of the positive roots is 
\begin{align*}
\beta_1&=\epsilon_1-\epsilon_2;\beta_2=\epsilon_1-\epsilon_3;\beta_3=\epsilon_2-\epsilon_3;\beta_4=\epsilon_1-\epsilon_4,\ldots,\beta_6=\epsilon_3-\epsilon_4;\ldots;\\
\beta_{R-r+1}&=\epsilon_1-\epsilon_{r+1};\ldots;\beta_R=\epsilon_r-\epsilon_{r+1},\end{align*}
where $R=\frac{1}{2}r(r+1)$. One readily checks that the $k$-roots are of the form $\epsilon_i-\epsilon_j$ such that $1 \leq i<j\leq r+1$ and $j-i=k$. It follows that $\sum_{i_j=k}\beta_j=\sum_{i_j=r-k+1} \beta_j$. Therefore, we assume without loss of generality that $k\leq r/2$. It follows from our definition of $\varpi_k$ that $\varpi_k-w_0\varpi_k=\varpi_k+\varpi_{r-k+1}=(\epsilon_1+\cdots+\epsilon_{r-k+1})-(\epsilon_{k+1}+\cdots+\epsilon_{r+1})$. Clearly this agrees with the sum of the $k$-roots. 
\medskip

Let $\Phi=B_r$ or $\Phi=C_r$. As in \cite{littelmann}, we label the simple roots $\alpha_r=\epsilon_1-\epsilon_2,\ldots,\alpha_2=\epsilon_{r-1}-\epsilon_r$, $\alpha_1=\epsilon_r$ for $B_r$ and respectively $\alpha_1=2\epsilon_r$ for $C_r$. (This is different from the usual enumeration in \cite{bourbaki}!)  With this labeling, we have $\varpi_k=\sum_{i=1}^{r-k+1} \epsilon_i$ for $k=2,\ldots r$ and $\varpi_1=\frac{1}{2}(\epsilon_1+\epsilon_2+\cdots+\epsilon_r)$ for $B_r$ and respectively $\varpi_1=\epsilon_1+\epsilon_2+\cdots+\epsilon_r$ for $C_r$. Note that in both cases $w_0=-1$. 

The good decomposition of the long word is
\[w_0=\sigma_1(\sigma_2\sigma_1\sigma_2)(\ldots)(\sigma_{r-1}\ldots\sigma_1\ldots\sigma_{r-1})(\sigma_r\sigma_{r-1}\ldots\sigma_1\ldots\sigma_{r-1}\sigma_r),\]
and the corresponding enumeration of the positive roots for $B_r$ is
\begin{align*}
\beta_1&=\epsilon_r;\beta_2=\epsilon_{r-1}+\epsilon_r,\beta_3=\epsilon_{r-1},\beta_4=\epsilon_{r-1}-\epsilon_r;\ldots;\\
\beta_{R-2r+2}&=\epsilon_1+\epsilon_2,\ldots,\beta_{R-r+1} \epsilon_1;\ldots \beta_r=\epsilon_1-\epsilon_2;
\end{align*}
where $R=r^2$. The enumeration for $C_r$ is 
\begin{align*}
\beta_1&=2\epsilon_r;\beta_2=\epsilon_{r-1}+\epsilon_r,\beta_3=2\epsilon_{r-1},\beta_4=\epsilon_{r-1}-\epsilon_r;\ldots;\\
\beta_{R-2r+2}&=\epsilon_1+\epsilon_2,\ldots,\beta_{R-r+1} 2\epsilon_1;\ldots \beta_r=\epsilon_1-\epsilon_2.
\end{align*}
One easily checks \eqref{eqn:posrooteq} holds in all cases. That is, the $k$-roots sum to $2\varpi_k$. 
\medskip

Let $\Phi=D_r$. As in \cite{littelmann}, we label the simple roots $\alpha_r=\epsilon_1-\epsilon_2,\ldots,\alpha_2=\epsilon_{r-1}-\epsilon_r$ and $\alpha_1=\epsilon_{r-1}+\epsilon_r$. (This is different from the usual enumeration in \cite{bourbaki}!) With this labeling, we have $\varpi_1=\frac{1}{2}(\epsilon_1+\epsilon_2+\cdots +\epsilon_{r-2}+\epsilon_{r-1}+\epsilon_r)$; $\varpi_2=\frac{1}{2}(\epsilon_1+\epsilon_2+\cdots+\epsilon_{r-2}+\epsilon_{r-1}-\epsilon_r)$; and $\varpi_i=\epsilon_1+\epsilon_2+\cdots +\epsilon_{r-i+1}$ for $3 \leq i \leq r$. In this case we have $w_0=-1$ if $r$ is even and $w_0=-\varepsilon$ if $r$ is odd, where $\varepsilon$ is the automorphism of $\Phi$ that interchanges $\alpha_1$ and $\alpha_2$ and leaves the other $\alpha_i$ fixed.

The good decomposition of the long word is 
\[\sigma_1\sigma_2(\sigma_3\sigma_1\sigma_2\sigma_3)(\sigma_4\sigma_3\sigma_1\sigma_2\sigma_3\sigma_4)(\ldots)(\sigma_r\ldots\sigma_4\sigma_3\sigma_1\sigma_2\sigma_3\sigma_4\ldots\sigma_r),\]
and the corresponding enumeration of the positive roots is
\begin{align*}
\beta_1&=\epsilon_{r-1}+\epsilon_r,\beta_2=\epsilon_{r-1}-\epsilon_r;\beta_3=\epsilon_{r-2}+\epsilon_{r-1},\beta_4=\epsilon_{r-2}-\epsilon_r\\
\beta_5&=\epsilon_{r-2}+\epsilon_r,\beta_6=\epsilon_{r-2}-\epsilon_{r-1};\ldots\\
\beta_{R-2r+3}&=\epsilon_1+\epsilon_2,\ldots,\beta_{R-r}=\epsilon_r+\epsilon_{r-1},\beta_{R-r+1}=\epsilon_1-\epsilon_r\\
\beta_{R-r+2}&=\epsilon_1+\epsilon_r,\beta_{R-r+3}=\epsilon_1-\epsilon_{r-1},\ldots,\beta_R=\epsilon_1-\epsilon_2;
\end{align*}
where $R=r^2-r$. With this labeling, Equation \eqref{eqn:posrooteq} is easily verified. 

\end{proof}

\begin{proof}[Proof of Theorem \ref{thm:rat}]
Our arguments generalize that of Theorem 5.1 of Fisher--Friedberg \cite{FF} and Proposition 4.1 of Chinta \cite{C}, which treat the case $\Phi=A_2$ and $\bm=(1,1)$.  First note that Theorem 6.1 of \cite{wmnds} shows $\mathcal{Z}^\ast(\X;\bm)$ is meromorphic and its set of polar hyperplanes is contained in the $W$-translates of the hyperplanes $s_i=1\pm 1/\gcd(n,\|\alpha_i\|^2)$. Thus, by arguments similar to the proof of Theorem 2 in \cite{wmds1}, the function $\mathcal{N}(\X;\bm)$ is entire. To prove that it is polynomial, we bound the degree. We consider two cases, dependent on the action of $w_0$ on $\Phi$. 
\begin{itemize}
\item Case 1: $A_1$, $\Phi=B_r$, $C_r$, $D_r$ with $r$ even, $E_7$, $E_8$, $F_4$, or $G_2$.
\item Case 2: $\Phi=A_r$ with $r \geq 2$, $D_r$ with $r$ odd, or $E_6$.
\end{itemize} 




We first address Case 1. In this case, we have $w_0=-1$. For now, assume that $\gcd(n,\|\alpha\|)=1$ for all positive roots $\alpha$ and $\bm=(1,\ldots, 1)$. Let $\vec{\mathcal{Z}}^*(\X;\bm)$ be the column vector consisting of all $\mathcal{Z}^*(\X;\bm,I)$ with $I=(I_1,\ldots,I_r)$ satisfying $I_j \in \{0,\ldots, n-1\}$. In matrix notation, the functional equation \eqref{eqn:globfe} can be expressed as
\begin{equation}
\label{eqn:matrxife}
\vec{\mathcal{Z}}^*(\X;\bm)=A_{\sigma_i}(X_i)\vec{\mathcal{Z}}^*(\sigma_i\X;\bm)
\end{equation}
where $A_{\sigma_i}(X_i)$ is an $n^r \times n^r$ matrix whose coefficients are the functions $P_{i,j}^t(X_i)$ and $Q_{i,j}^t(X_i)$. We note that by definition, each $A_{\sigma_i}(X_i)\ll X_i^{1-n}$. Let $t=\#\Phi^+$ and fix a reduced expression $w_0=\sigma_{i_1}\sigma_{i_2}\cdots\sigma_{i_t}$. Let $A_{w_0}(\X)$ be the matrix product corresponding to repeated application of the functional equations for this this choice. Indeed we can write $A_{w_0}(\X)$ explicitly as $\prod_{j=1}^{t} A_{\sigma_{i_j}}(\sigma_{i_t}\sigma_{i_{t-1}}\cdots\sigma_{i_{j+1}}\x^{\alpha_{i_j}})$. Note that by Bourbaki VI Corollary 2 to Proposition 17, the $\sigma_{i_t}\sigma_{i_{t-1}}\cdots\sigma_{i_{j+1}}\x^{\alpha_{i_j}}$ correspond to the distinct positive roots of $\Phi$, with some extraneous $q$ powers.  Thus, we can express the functional equation \eqref{eqn:globfe} in matrix form as
\begin{equation}
\label{eqn:vectorfe}\vec{\mathcal{Z}}^*(\X;\bm)=A_{w_0}(\X)\vec{\mathcal{Z}}^*(\frac{1}{q^2X_1},\ldots, \frac{1}{q^2X_r};\bm).
\end{equation}
Multiplying both sides by $D(\X)D(\frac{1}{q^2X_1},\ldots,\frac{1}{q^2X_r})$ yields
\begin{equation}
\label{eqn:rationalized}
D(\frac{1}{q^2X_1},\ldots,\frac{1}{q^2X_r})\vec{\mathcal{N}}(\X;\bm)=D(\X)A_{w_0}(\X)\vec{\mathcal{N}}(\frac{1}{q^2X_1},\ldots,\frac{1}{q^2X_r};\bm)
\end{equation}
where $\vec{\mathcal{N}}(\X;\bm)$ is the vector with components $D(\X)\vec{\mathcal{Z}}^*(\X;\bm,I)$. It suffices now to show that each entry of $\vec{\mathcal{N}}(\X;\bm)$ is of degree at most $\rho_i$ in each $X_i$. Let $X_i \to \infty$ in \eqref{eqn:rationalized}. The terms $D(\frac{1}{q^2X_1},\ldots,\frac{1}{q^2X_r})$ and $\vec{\mathcal{N}}(\frac{1}{q^2X_1},\ldots,\frac{1}{q^2X_r};\bm)$ remain bounded, while
\[D(\X)=O(|\prod_{i=1}^r X_i^{\rho_i n}|)\mbox{ and }A_{w_0}(\X)=O(|\prod_{i=1}^r X_i^{\rho_i-\rho_in}|).\]
Therefore the right hand side is $O(\prod_{i=1}^r X_i^{\rho_i})$.
Thus, $\mathcal{N}(\X;\bm)$ is a polynomial of degree at most $\rho_i$ in each $X_i$.

If $\gcd(n,\|\alpha\|)> 1$ for some $\alpha \in \Phi$, then our vector $\vec{\mathcal{Z}}^*(\X,\bm)$ has length $\prod_{i=1}^r n(\alpha_i)$. Still, we derive \eqref{eqn:rationalized} is the same way, and we readily check that as $X_i \to \infty$, $D(\X)=O(|\prod_{i=1}^rX_i^{{\rho_i}n(\alpha_i)}|$ and $A_{w_0}(\X)=O(|\prod_{i=1}^r X_i^{\rho_i-\rho_in(\alpha_i)}|)$. 

For general $\bm$, we have $A_{\sigma_i}(X_i)\ll X_i^{1-n(\alpha_i)+\deg m_i}$.  In the notation of theorem, we write $A_{w_0}(\X)=\prod_{j=1}^{t} A_{\sigma_{i_j}}(q^*\X^{\theta_{j}})$, where $q^*$ represents some extraneous $q$ powers. 
It follows that \[A_{w_0}(\X)=O(|\prod_{i=1}^r X_i^{\rho_i-\rho_in(\alpha_i)+\sum_{k=1}^rc_{k_i}\deg m_k}|).\] 
The same strategy as above shows that $\mathcal{N}(\X;\bm)$ is polynomial in $X_1,\ldots, X_r$ of degree at most $\rho_i+\sum_{k=1}^r c_{k_i}\deg m_k$ in each $X_i$. Note that this expression is independent of our choice of reduced expression for $w_0$ by Theorem \ref{thm:posroots}.


In Case 2 it is not true that $w_0=-1$. However, here we have additional automorphisms of $\Phi$ (see \cite[VI]{bourbaki}) that yield extra functional equations (in addition to that of \eqref{eqn:globfe}), which correspond to relabeling the variables $X_1,\ldots,X_r$. In each case, let $B$ represent the matrix corresponding to the functional equation as follows: for $\Phi=A_r$ and $r \geq 2$, the functional equation that interchanges $s_i$ and $s_{r-i+1}$; for $\Phi=D_r$ and $r$ odd, the functional equation that interchanges $s_{r-1}$ and $s_r$; and for $E_6$, the functional equation that exchanges $s_1,s_2,s_3,s_4,s_5,s_6$ with $s_6,s_2,s_5,s_4,s_3,s_1$, respectively. Now for each case separately, replace \eqref{eqn:vectorfe} with
\[\vec{\mathcal{Z}}^*(\X;\bm)=BA_{w_0}(\X)\vec{\mathcal{Z}}^*(\frac{1}{q^2X_1},\ldots, \frac{1}{q^2X_r};\bm).\]
The remainder of the argument for Case 1 now follows in the exact same way.
\end{proof}

\subsection{The Variable Change}
\label{sec:untwisted}
In what follows, we abuse terminology and refer to both $F(\x;\theta)=F(\x;\ell)$ and $N(\x;\theta)=N(\x;\ell)$ as $p$-parts. (Previously, we referred to $F(\x;\ell)$ as {\em modified} $p$-parts.) Recall that we say $Z(\bs;\bm)$ is untwisted when $\bm=(1,\ldots,1)$. Similarly, we say that the $p$-part $F(\x;\ell)$ is untwisted when $\ell=(0,\ldots,0)$. It was noticed in\cite{C}, for $\Phi=A_2$, and \cite{chintamohler}, for $\Phi=A_r$ and $n\gg r$, that a variable change transforms $F(\x;{\bf 0})$ to $\mathcal{Z}^\ast(\X;{\bf 1})$. We now generalize this observation to all $\Phi$ and $n$.

\begin{prop}
\label{prop:localglobal}
Let $\tilde{F}(\X;0,\ldots,0)$ denote $F(\x;0,\ldots,0)$ after the variable change
\begin{equation}
\label{eqn:change}
\left\{\begin{array}{lll}s_i &\mapsto& 2-s_i\\ p=|P| &\mapsto& 1/q\\g^\ast_k(P)&\mapsto&\tau(\epsilon^k) \end{array}\right.,
\end{equation}
where $\tau(\epsilon^k)$ and $g_k^*(P)$are the Gauss sums defined by \eqref{eqn:gausssum} and \eqref{eqn:globgauss}, respectively. Then
\(
\mathcal{Z}^\ast(\X;1,\ldots,1)=\tilde{F}(\X;0,\ldots,0).
\)
\end{prop}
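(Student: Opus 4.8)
The strategy is to exploit that both sides are rational functions uniquely determined by their functional equations together with a normalization, and then to check that the variable change \eqref{eqn:change} intertwines the two sets of functional equations. First I would record that $F(\x;\mathbf{0})$ is the $W$-invariant rational function built in \eqref{eqn:ppart}, so by \eqref{eqn:ppfe} it satisfies, for each simple reflection $\sigma_k$, the identity
\[
F_\beta(\x)=\mathcal{P}_{\beta,\mathbf{0},k}(x_k)F_\beta(\sigma_k\x)+\mathcal{Q}_{\beta,\mathbf{0},k}(x_k)F_{\sigma_k\bullet\beta}(\sigma_k\x),
\]
where with $\ell=\mathbf{0}$ we have $\theta=\rho$, $\sigma_k\bullet\lambda=\sigma_k\lambda+\alpha_k$, and the grading is by $\Lambda/\Lambda'$. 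On the global side, \eqref{eqn:globfe} with $\bm=(1,\ldots,1)$ (so $|m_i|=1$, $J_i=0$, and the $P^t$, $Q^t$ reduce to $P^t_{I_i,0}$, $Q^t_{I_i,0}$) reads
\[
Z^\ast(\bs;\mathbf{1},I)=P^{\|\alpha_i\|^2}_{I_i,0}(s_i)Z^\ast(\sigma_i\bs;\mathbf{1},I)+Q^{\|\alpha_i\|^2}_{I_i,0}(s_i)Z^\ast(\sigma_i\bs;\mathbf{1},\sigma_i\bullet I).
\]
The key observation is that the index set $I=(I_1,\ldots,I_r)$ with $I_j\in\{0,\ldots,n(\alpha_j)-1\}$ labeling the pieces $Z^\ast(\bs;\mathbf{1},I)$ is naturally identified with $\Lambda/\Lambda'$ via $I\mapsto \beta=\sum I_j\alpha_j \bmod \Lambda'$ (this is exactly the decomposition $\tilde A=\bigoplus_{\lambda}\tilde A_\lambda$ specialized to the function field setting), and under this identification $I\mapsto\sigma_i\bullet I$ matches $\beta\mapsto\sigma_i\bullet\beta$. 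So both functional equations have literally the same combinatorial shape.

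Next I would carry out the comparison of the coefficient functions. Starting from \eqref{eqn:locPQ} one computes $\delta_k(\beta)=d(\sigma_k\bullet\beta-\beta)$ in terms of the $\alpha_k$-coefficient of $\beta$, namely $\delta_k(\beta)=1-\langle\beta,\alpha_k\rangle$ when $\ell=\mathbf 0$; matching this against the index $I_k$ via $(\delta_k(\beta))_{n(\alpha_k)}$ versus $(j+1-2i)_{n(\alpha_k)}$ in \eqref{eqn:PQ} is the bookkeeping that pins down the correspondence of indices. Then, applying the substitution $x_k=p^{-s_k}\mapsto q^{s_k-2}$ (which is exactly "$s_k\mapsto 2-s_k$ together with $p\mapsto 1/q$", since $p^{-s_k}=q^{-s_k\deg P}$ collapses to $q^{-s_k}$ once $p\to 1/q$ and then inverting the exponent sign gives $q^{s_k}$, up to the shift), one checks directly that
\[
\mathcal{P}_{\beta,\mathbf 0,k}(x_k)\ \longmapsto\ P^{\|\alpha_k\|^2}_{I_k,0}(s_k),\qquad
\mathcal{Q}_{\beta,\mathbf 0,k}(x_k)\ \longmapsto\ Q^{\|\alpha_k\|^2}_{I_k,0}(s_k),
\]
where on the $\mathcal{Q}$ side the replacement $g^\ast_t(P)\mapsto\tau(\epsilon^t)$ turns $-g^\ast_{-\|\alpha_k\|^2\delta_k(\beta)}(P)$ into the Gauss-sum factor $-\tau(\epsilon^{\|\alpha_k\|^2(2i-j-1)})$ appearing in \eqref{eqn:PQ}; the rational prefactors $(px_k)^{\ldots}(1-1/p)/(1-(px_k)^{n(\alpha_k)}/p)$ and $(px_k)^{\ldots}(1-(px_k)^{n(\alpha_k)})/(1-(px_k)^{n(\alpha_k)}/p)$ go to the corresponding expressions in $q$ and $q^{-s_k}$ by the same substitution. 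This is the routine-but-delicate calculation; I would do it once for a general simple reflection and note it is uniform in $\Phi$ and $n$ because everything is expressed through $n(\alpha_k)$ and $\|\alpha_k\|^2$.

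Finally, having matched the functional equations piece by piece, I would invoke uniqueness: by Theorem \ref{thm:rat} the vector $\vec{\mathcal Z}^\ast(\X;\mathbf 1)$ is a rational function with controlled denominator, and $\tilde F(\X;\mathbf 0)$ is rational by construction; both satisfy the same system \eqref{eqn:vectorfe}-type of functional equations under $W$ (the Chinta--Gunnells action on one side, the explicit matrix relations on the other), both are holomorphic away from the same $W$-translates of hyperplanes $s_i=1\pm 1/\gcd(n,\|\alpha_i\|^2)$, and both have constant term $1$ (normalize the $p$-part so its constant term is $1$, which is the standard normalization; the constant term of $\mathcal Z^\ast$ is $\Xi(\bs)$ times $H(\mathbf 1;\mathbf 1)=1$ evaluated in the appropriate limit). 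An argument in the spirit of \cite{wmds1} (already cited in the proof of Theorem \ref{thm:rat}) then forces the two rational functions to coincide: the difference is a rational function with no poles satisfying a homogeneous version of the functional equations, hence identically zero. \textbf{The main obstacle} I anticipate is not any single hard estimate but rather getting the dictionary exactly right — carefully verifying that the index $I\in\prod\{0,\ldots,n(\alpha_j)-1\}$ corresponds to $\beta\in\Lambda/\Lambda'$, that $\delta_k(\beta)$ translates into the exponent $j+1-2i$ in \eqref{eqn:PQ}, and that the variable change $p^{-s_i}\mapsto q^{s_i-2}$ (not $q^{-s_i}$) is forced by combining the two substitutions $s_i\mapsto 2-s_i$ and $p\mapsto 1/q$ together with $\deg P$-cancellation. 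Once that dictionary is fixed, the equality of coefficient functions and the uniqueness argument are both straightforward.
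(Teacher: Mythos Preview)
Your overall strategy matches the paper's proof: show that the variable change \eqref{eqn:change} carries the local functional equations \eqref{eqn:ppfe} to the global ones \eqref{eqn:globfe}, check that the denominators agree, normalize by the constant term, and conclude by uniqueness. Two points need correction or sharpening.

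First, your claim that $J_i=0$ when $\bm=(1,\ldots,1)$ is wrong. By definition $J_i(m,I)=\deg m_i-\sum_{j\neq i}c(j,i)I_j$, so for $\deg m_i=0$ one gets $J_i=-\sum_{j\neq i}c(j,i)I_j$, which depends on $I$. The paper makes this explicit: writing $\beta=\sum\beta_j\alpha_j$ it computes $\delta_i(\beta)=1-2\beta_i-\sum_{j\neq i}c(j,i)\beta_j$ and sets $I=\beta_i$, $J=-\sum_{j\neq i}c(j,i)\beta_j$. With these values (not $J=0$) the substitution \eqref{eqn:change} sends $\mathcal P_{\beta,\mathbf 0,i}$, $\mathcal Q_{\beta,\mathbf 0,i}$ to $P^{\|\alpha_i\|^2}_{I,J}$, $Q^{\|\alpha_i\|^2}_{I,J}$. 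Your own later formula $\delta_k(\beta)=1-\langle\beta,\alpha_k\rangle$ is consistent with this, so the slip is local, but as written the displayed functional equation for $Z^\ast$ is incorrect.

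Second, your uniqueness step is too loose. Saying the difference ``is a rational function with no poles satisfying a homogeneous version of the functional equations, hence identically zero'' is not justified as stated; a nonzero polynomial can satisfy homogeneous recurrences unless you control its support. The paper instead verifies that \eqref{eqn:change} sends $\Delta(\x)^{-1}$ to $\Xi(\X)$ (so the denominators literally coincide), and then invokes Theorem~\ref{thm:unstab}: for $\ell=\mathbf 0$ one has $\theta=\rho$, so $\Theta^+=\{\rho\}$ and the $p$-part numerator is determined by its functional equations together with the single coefficient $a_0$. Since both $\tilde F(\X;\mathbf 0)$ and $\mathcal Z^\ast(\X;\mathbf 1)$ have constant term $1$, they agree. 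You should cite Theorem~\ref{thm:unstab} (or reproduce its content) rather than the generic homogeneity argument.
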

\begin{proof}
Let $\beta=\sum_{j=1}^r \beta_j \alpha_j$ and let $(\sigma_i\bullet\beta)_j$ denote the $\alpha_j$ coefficient of $\sigma_i \bullet \beta$. One computes
\[
(\sigma_i\bullet\beta)_j=\left\{\begin{array}{ll} \beta_j&\mbox{ if } i \neq j,\\
1-\beta_i-\sum_{j \neq i} \beta_jc(j,i) &\mbox{ if } i= j.\end{array}\right.\] 
It follows that $\delta_i(\beta)=1-\sum_{j\neq i} c(j,i)\beta_j-2\beta_i$. Put $I=\beta_i$ and $J=-\sum_{j\neq i} c(j,i)\beta_j$. Then the transformation \eqref{eqn:change} takes $\mathcal{P}_{\beta,{\bf 0},i}(x_i)$ and $\mathcal{Q}_{\beta,{\bf 0},i}(x_i)$ of \eqref{eqn:locPQ} to $P_{I,J}^{\|\alpha_i\|^2}(s_i)$ and $Q_{I,J}^{\|\alpha_i\|^2}(s_i)$ of \eqref{eqn:PQ}, under the assumption $\bm=(1,\ldots, 1)$. Thus the functional equations \eqref{eqn:ppfe} of $F(\x;0,\ldots,0)$ and \eqref{eqn:globfe2} of $\mathcal{Z}^\ast(\X;1,\ldots,1)$ coincide up to this change of variables.

Next, notice that $\mathcal{Z}^\ast(\X;1,\ldots,1)$ and $\tilde{F}(\X;0,\ldots,0)$ have the same polar behavior.  This follows from Theorem \ref{thm:rat} and the observation that
substituting $\zeta_\mathcal{O}(s)=(1-q^{1-s})^{-1}$ we can rewrite 
\begin{align*}
\Xi(\X)&=\prod_{\alpha=\sum k_i\alpha_i>0} \zeta_\mathcal{O}(1+n(\alpha)\sum_{i=1}^r k_i(s_i-1))\\
&=\prod_{\alpha=\sum k_i\alpha_i>0} (1-q^{1-(1-n(\alpha)d(\alpha)+n(\alpha)\sum_{i=1}^rk_is_i)})^{-1}\\
&=\prod_{\alpha>0}(1-q^{n(\alpha)d(\alpha)}\X^{n(\alpha)\alpha})^{-1}.
\end{align*}                              
Then \eqref{eqn:change} takes $p^nx_i^n$ to $q^nX_i^n$ and hence $\Delta(\x)^{-1}$ to $\Xi(\X)$.

Finally, both $\mathcal{Z}^\ast(\X;1,\ldots,1)$ and $\tilde{F}(\X;0,\ldots,0)$ have constant term equal to one. Applying Theorem \ref{thm:unstab},
both functions are uniquely determined by their functional equations; thus, they must be equal.
\end{proof}

\subsection{The Support of $Z^*(\bs;\bm)$}
\label{sec:twisted}
We can now describe the support of $Z^*(\bs;\bm)$ for any $\bm$.

\begin{thm}
\label{thm:localglobal}
Fix $\bm \in \mathcal{O}^r$ and put $\ell=(\deg m_1,\ldots,\deg m_r)$. Define $\theta$ according to \eqref{eqn:theta}, and let $\tilde{F}(\X;\theta)$ denote $F(\x;\theta)$ after applying \eqref{eqn:change}. Let $\Theta$ be the set of dominant weights in the representation of highest weight $\theta$, and let $\Theta^+\subset\Theta$ be the subset of regular dominant weights. Then\begin{equation*}
\mathcal{Z}^\ast(\X;\bm)=\sum_{\xi \in \Theta^+} M_{\theta-\xi}\tilde{F}(\X;\xi)\X^{\theta-\xi},
\end{equation*}
where for $\lambda=\sum_{i=1}^r \lambda_i \alpha_i$, the coefficients $M_\lambda$ are the character sums 
\begin{equation*}
\label{eqn:mult}
M_\lambda=\sum_{\stackrel{{\bf c} \in (\mathcal{O}_{mon})^r}{\deg c_i=\lambda_i}} H({\bf c};{\bf m}).
\end{equation*} 
\end{thm}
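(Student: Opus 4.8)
The plan is to combine the rationality result (Theorem~\ref{thm:rat}) with the characterization of $p$-parts via recurrence relations (Theorems~\ref{thm:unstab} and~\ref{thm:unstabsum}) and the explicit matching of functional equations established in Proposition~\ref{prop:localglobal}. First I would fix $\bm$ and $\ell=(\deg m_1,\dots,\deg m_r)$, and observe via Theorem~\ref{thm:rat} that $\mathcal{N}(\X;\bm)=D(\X)\mathcal{Z}^\ast(\X;\bm)$ is a polynomial with degree bounds governed by $\theta$. The key point is that the global functional equations~\eqref{eqn:globfe2}, after the change of variables~\eqref{eqn:change}, become formally identical to the $p$-part functional equations~\eqref{eqn:ppfe} with twisting parameter $\ell$ --- this requires checking that the twisting contributions $|m_i|^{1-s_i}$ and the shifted indices $J_i(m,I)=\deg m_i-\sum_{j\neq i}c(j,i)I_j$ on the global side correspond, under~\eqref{eqn:change}, to the $\ell$-dependence of $\mathcal{P}_{\beta,\ell,k}$ and $\mathcal{Q}_{\beta,\ell,k}$ in~\eqref{eqn:locPQ}--\eqref{eqn:locQ} (the computation is analogous to the one carried out in Proposition~\ref{prop:localglobal} but now with $l_i=\deg m_i$ rather than $l_i=0$).

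Next I would argue that $\mathcal{Z}^\ast(\X;\bm)$, viewed after the variable change, satisfies the same recurrence relations as the $p$-part $N(\x;\theta)$ (equivalently $F(\x;\theta)$). Since a polynomial satisfying those recurrences is, by Theorem~\ref{thm:unstabsum}, expressible as $\sum_{\xi\in\Theta^+} m_\xi N(\x;\xi)\x^{\theta-\xi}$ for some complex constants $m_\xi$ --- and since multiplying by $\Delta/D$ converts $N(\x;\xi)$ back into the modified $p$-part $F(\x;\xi)$, which is what~\eqref{eqn:change} maps to $\tilde F(\X;\xi)$ --- one obtains the asserted shape
\[
\mathcal{Z}^\ast(\X;\bm)=\sum_{\xi\in\Theta^+} M_{\theta-\xi}\,\tilde F(\X;\xi)\,\X^{\theta-\xi}
\]
with $M_{\theta-\xi}$ to be identified. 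The degree bounds from Theorem~\ref{thm:rat}, together with Theorem~\ref{thm:support} locating the support of each $\tilde F(\X;\xi)$ inside the shifted polytope, guarantee that no extra terms appear and that the sum is genuinely finite over $\Theta^+$.

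The final step is to pin down the coefficients $M_\lambda$. I would do this by extracting, for each $\xi\in\Theta^+$ and $\lambda=\theta-\xi$, the $\X^{\lambda}$-coefficient on both sides: on the right-hand side, since $\tilde F(\X;\xi)$ has constant term $1$ and its support sits strictly above $\X^{0}$ in the relevant partial order (Theorem~\ref{thm:gap} or Theorem~\ref{thm:support} applied to the reflected polytope), the coefficient of $\X^{\theta-\xi}$ in $\tilde F(\X;\xi)\X^{\theta-\xi}$ is exactly $M_{\theta-\xi}$, and no other term $\tilde F(\X;\xi')\X^{\theta-\xi'}$ with $\xi'\neq\xi$ contributes to that monomial. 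On the left-hand side, the $\X^\lambda=\prod_i X_i^{\lambda_i}$-coefficient of $\mathcal{Z}^\ast(\X;\bm)=\Xi(\X)Z(\bs;\bm)$ is computed from the Dirichlet series definition~\eqref{eqn:zdef}: writing $\Xi$ as the geometric-series product from Proposition~\ref{prop:localglobal}, the coefficient of $\prod X_i^{\lambda_i}$ picks out $\sum_{\deg c_i=\lambda_i} H(\bc;\bm)$ provided $\lambda$ is small enough that the $\Xi$-expansion contributes only its constant term $1$ --- and $\xi\in\Theta^+$ regular dominant is precisely the range where this holds. This yields $M_\lambda=\sum_{\deg c_i=\lambda_i} H(\bc;\bm)$ as claimed.

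I expect the main obstacle to be the bookkeeping in the second paragraph: verifying cleanly that $\mathcal{Z}^\ast(\X;\bm)$, after~\eqref{eqn:change}, really does satisfy \emph{all} the defining recurrence relations of $N(\x;\theta)$ (not merely the functional equations), so that Theorem~\ref{thm:unstabsum} applies. This amounts to checking that the matching of functional equations from Proposition~\ref{prop:localglobal} propagates correctly in the twisted setting and that the entirety / polynomiality from Theorem~\ref{thm:rat} rules out spurious denominators, so that the recurrences --- which are really statements about coefficients of a polynomial fixed by the $W$-action --- transfer verbatim. The identification of $M_\lambda$ in the third paragraph is then a bounded, essentially combinatorial coefficient comparison.
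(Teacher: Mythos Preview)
Your overall strategy matches the paper's: verify that under the variable change~\eqref{eqn:change} the global functional equations~\eqref{eqn:globfe2} for $\mathcal{Z}^\ast(\X;\bm)$ coincide with the $p$-part functional equations~\eqref{eqn:ppfe} at twisting parameter $\ell=(\deg m_1,\dots,\deg m_r)$, invoke Theorem~\ref{thm:unstabsum} to obtain the decomposition $\sum_{\xi\in\Theta^+} m_\xi\,\tilde F(\X;\xi)\X^{\theta-\xi}$ with undetermined scalars $m_\xi$, and then identify $m_\xi=M_{\theta-\xi}$ by coefficient comparison. The first two steps are handled as in the paper.

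The gap is in your coefficient identification. You compare the $\X^{\theta-\xi}$-coefficient of $\mathcal{Z}^\ast=\Xi\cdot Z$ directly and assert that ``$\xi\in\Theta^+$ regular dominant is precisely the range where the $\Xi$-expansion contributes only its constant term $1$.'' This is neither proved nor generally true: the power series of $\Xi(\X)=\prod_{\alpha>0}(1-q^{n(\alpha)d(\alpha)}\X^{n(\alpha)\alpha})^{-1}$ contributes at $\X^{\theta-\xi}$ whenever $\theta-\xi-n(\alpha)\alpha$ has nonnegative coordinates for some $\alpha>0$, and for $\xi$ deep in $\Theta^+$ (e.g.\ $\xi=\rho$ with $\theta$ large) this can certainly occur. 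Likewise, your claim that no other term $\tilde F(\X;\xi')\X^{\theta-\xi'}$ contributes to $\X^{\theta-\xi}$ is not immediate, since $\tilde F(\X;\xi')$ is a genuine rational function whose power-series support is not controlled by Theorem~\ref{thm:support} or Theorem~\ref{thm:gap} directly. The paper fixes both issues at once by first dividing out $\Xi=\tilde\Delta^{-1}$: setting $\tilde f(\X;\xi)=\tilde\Delta(\X)\tilde F(\X;\xi)$, one gets $Z(\bs;\bm)=\sum_{\xi}m_\xi\,\tilde f(\X;\xi)\X^{\theta-\xi}$, and now Theorem~\ref{thm:gap} (which is stated for $\Delta\cdot N$, hence for $\tilde f$ up to the benign factor $D^{-1}$) controls the support of each summand well enough to run an induction on $\xi$ in the partial order $\preceq$. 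Once you make that move, your argument goes through.
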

\begin{rem*}
The $M_\lambda$ are the $\X^\lambda$ coefficients of $Z(\bs;\bm)$ --- the original series, without the normalizing factors --- expressed as a power series in $X_i=q^{-s_i}$.
\end{rem*}
\begin{proof}

Let $\mathcal{F}(\X)=\mathcal{N}(\X)/D(\x)$ be any rational function with the same polar behavior as $\mathcal{Z}^\ast(\X;\bm)$ and that satisfies \eqref{eqn:globfe2}. Write $\mathcal{N}(\X)=\sum_{\lambda} b_\lambda \X^\lambda$. By the proof of Proposition \ref{prop:localglobal} and Theorem \ref{thm:unstab}, the polynomial $\mathcal{N}(\X)$ is completely determined by the coefficients $b_{\theta-\xi}$ for $\xi \in \Theta^+$.

Since both $\mathcal{F}(\X)$ and $\tilde{F}(\x;\xi)$ share the same denominator, Theorem \ref{thm:unstabsum} yields
\begin{equation}
\label{eqn:sum}
\mathcal{F}(\X)=\sum_{\xi \in \Theta^+} m_{\xi}\tilde{F}(\X;\xi)\X^{\theta-\xi},\hspace{1cm} m_\xi \in \C.
\end{equation}

It remains to identify the coefficients $M_\lambda$. For this we let $f(\x;\ell):=\Delta(\x)F(\x;\ell)$. Let $m_\xi=M_{\theta-\xi}$, where $M_\lambda$ is the $\X^\lambda$ coefficient of $Z(\bs;\bm)$, written as a power series in the $X_i=q^{-s_i}$. For each $\xi \in \Theta^+$, we have $\tilde{F}(\X;\xi)=\tilde{f}(\X;\xi)/\tilde{\Delta}(\X)$. It follows that
 \begin{equation}
 \label{eqn:Fs}
 \sum_{\xi \in \Theta^+} m_\xi \tilde{F}(\X;\xi)\X^{\theta-\xi}=\frac{1}{\tilde{\Delta}(\X)}\sum_{\xi \in \Theta^+} m_\xi \tilde{f}(\X;\xi)\X^{\theta-\xi}.
 \end{equation}
Since $\mathcal{Z}^\ast(\X;\bm)=\Xi(\bs)Z(\bs;\bm)$ and $\Xi(\bs)=\tilde{\Delta}(\X)^{-1}$, from \eqref{eqn:Fs} we have 
\[
Z(\bs;\bm):=\sum_{\lambda} M_\lambda \x^\lambda=\sum_{\xi \in \Theta^+} m_\xi \tilde{f}(\X;\xi)\X^{\theta-\xi}.
\]
Writing both sides as power series in the $X_i$, the constant coefficient of every series on the right-hand side is one. To equate $m_\xi$ with $M_{\theta-\xi}$, we induct on $\xi$. It is clear that $m_\theta=M_0=1$. Choose $\xi\prec \theta$, and suppose that $m_\xi'=M_{\theta-\xi'}$ for all $\xi'\succ\xi$. By Theorem \ref{thm:gap}, the $\tilde{f}(\X;\xi')\X^{\theta-\xi'}$ are supported outside or on the boundary of the $\Pi_\xi'$. Thus scaling $\tilde{f}(\X;\xi')\X^{\theta-\xi'}$ by $M_{\theta-\xi'}$ does not affect the coefficients of $\tilde{f}(\X;\xi)\X^{\theta-\xi}$. Since the constant coefficient of $\tilde{f}(\X;\xi)\X^{\theta-\xi}$ is one, we must have $m_\xi=M_{\theta-\xi'}$.  
\end{proof}

\subsection{Examples}
\label{sec:examples}
We now apply Theorem \ref{thm:localglobal} to two low rank examples. First we consider $\Phi=A_2$. We take $n=3$ and $q=7$, and choose $\bm=(T^3+5T+2,1)$. Note that $T^3+5T+2$ is irreducible over $\F_7$. Put $\theta=4\varpi_1+\varpi_2$. One can show  $\Theta^+=\{\theta,2(\varpi_1+\varpi_2),\varpi_1+\varpi_2\}$ and $\{\theta-\xi:\xi\in \Theta^+\}=\{0,\alpha_1,2\alpha_1+\alpha_2\}.$ Let $\X=(X_1,X_2)$ where $X_i=q^{-s_i}$. Theorem \ref{thm:localglobal} implies that
     \begin{align}
     \label{eqn:applythm}
	\mathcal{Z}^\ast(\X;\bm)&=M_{0}\tilde{F}(\X;3,0)+M_{\alpha_1}\tilde{F}(\X;1,1)X_1+M_{2\alpha_1+\alpha_2}\tilde{F}(\X;0,0)X_1^2X_2,
     \end{align}
    where
    \begin{align*}
    M_{0}&=H(1,1;\bm)=1,\\
  M_{\alpha_1}&=\sum_{c_1 \deg 1} H(c_1,1;\bm),\\
     M_{2\alpha_1+\alpha_2}&=\sum_{c_1 \deg 2}\sum_{c_2 \deg 1} H(c_1,c_2;\bm).
     \end{align*}
	Using Magma, we find that $M_{\alpha_1}=\tau(\epsilon)(-0.5+2.598i)$ and  $M_{2\alpha+\alpha_2}\approx \tau(\epsilon)^3(6.5+2.598i)$. Figure \ref{fig:globsupport1} shows the support of the numerator of the global series $Z^\ast(\X;\bm)$, expressed in terms of the shifted $p$-parts. The support of $\tilde{N}(\X;3,0)$ is shown in light gray, the support of $M_{\alpha_1}\tilde{N}(\X;1,1)X_1$ is shown in gray, and the support of $M_{2\alpha_1+\alpha_2}\tilde{N}(\X;0,0)X_1^2X_2$ is shown in black.
	\begin{rem*}
	To simplify the computation of the Gauss sums, we used the following useful fact: For $c \in A$, let $\mu(c)$ denote the M\"{o}bius function. Then \cite[Theorem 2.1]{patterson}  \[g(1,c)=\mu(c)\left(\leg{c'}{c}_3\right)(-\tau(\epsilon))^{\deg{c}}.\]
	\end{rem*}

	\begin{rem*}
	We note that $M_{\alpha_1}$ and $M_{2\alpha_1+\alpha_2}$ are algebraic. In fact, they live in the compositum of $\Q(\zeta_3)$ and $\Q(\zeta_7+\zeta_7^{-1})$, where $\zeta_a$ denotes a primitive $a$th root of unity.
	\end{rem*}

%

\begin{figure}[h!]
\includegraphics[scale=.55]{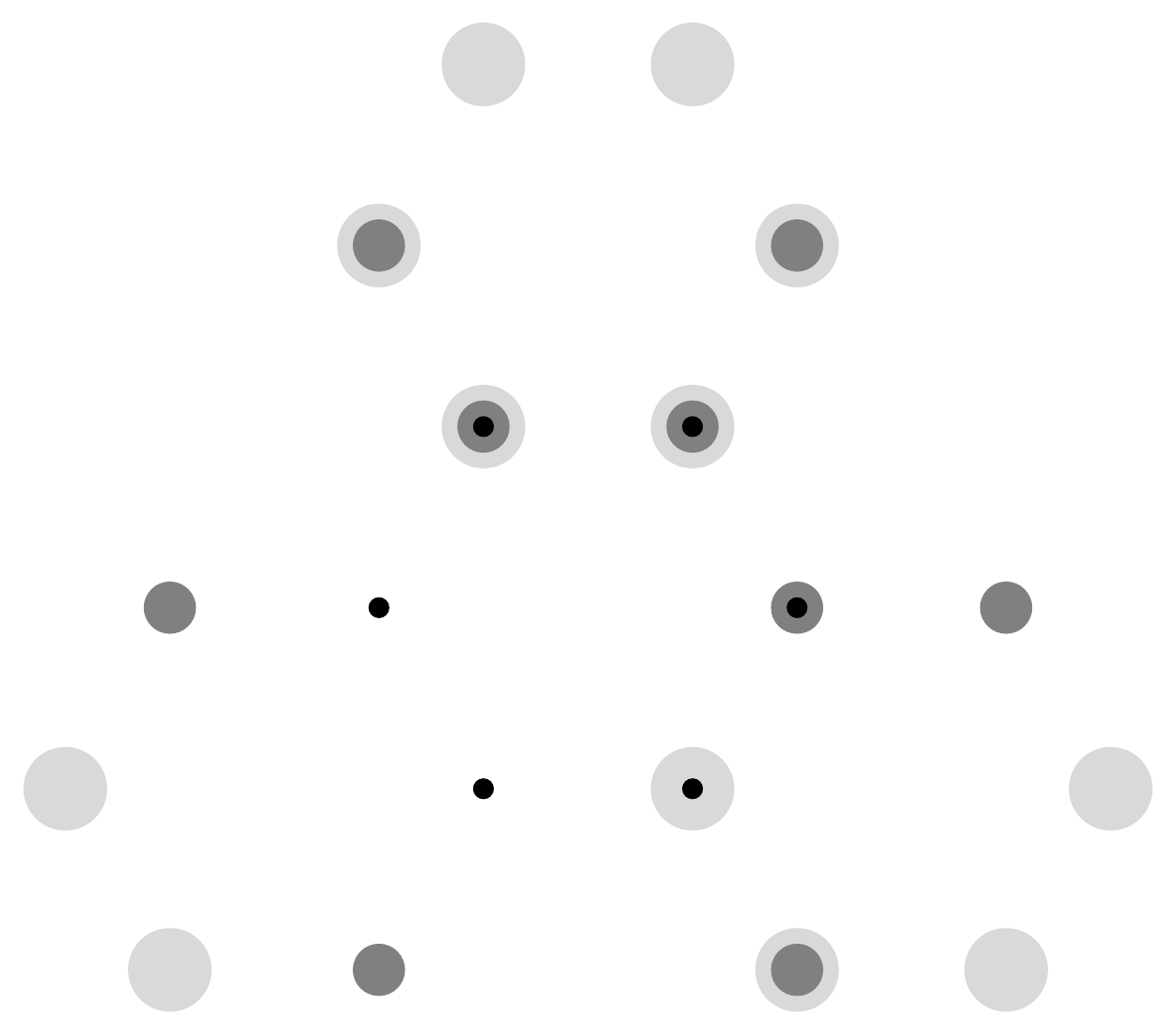}
\caption{Support of the numerator of $Z_{A_2}^\ast(\X;T^3+5T+2,1)$, in terms of the $\tilde{N}(\X;\xi)$.}
\label{fig:globsupport1}
\end{figure}

Next we consider $\Phi=B_2$. We take $n=2$ and $q=5$, and choose ${\bf m}=(1,T^2+2)$. To apply Theorem, we put $\theta=\varpi_1+3\varpi_2$. One can show that 
$\Theta^+=\{\rho, \varpi_1+3\varpi_2,2\varpi_1+\varpi_2\}$ and $\{\theta-\xi:\xi \in \Theta^+\}=\{0,\alpha_2,\alpha_1+2\alpha_2\}.$
Thus
\[Z^\ast(\X;\bm)=M_{0}\tilde{F}(\X;\varpi_1+3\varpi_2)
+M_{\alpha_2}\tilde{F}(\X;2\varpi_1+\varpi_2)X_2
+M_{\alpha_1+2\alpha_2}\tilde{F}(\X;\rho)X_1X_2^2,
\]
where
    \begin{align*}
    M_{0}&=H(1,1;\bm)=1,\\
  M_{\alpha_2}&=\sum_{c_2 \deg 1} H(1,c_2;\bm),\\
     M_{\alpha_1+2\alpha_2}&=\sum_{c_1 \deg 1}\sum_{c_2 \deg 2} H(c_1,c_2;\bm).
     \end{align*}

Using Magma, we compute that $M_0=1$, $M_{\alpha_2}=-\tau(\epsilon)$, and $M_{\alpha_1+2\alpha_2}=q^2$. Figure \ref{fig:globsupport2} shows the support of the numerator of the global series $Z^\ast(\X;\bm)$, expressed in terms of the shifted $p$-parts. The support of $\tilde{N}(\X;0,2)$ is shown in light gray, the support of $M_{\alpha_1}\tilde{N}(\X;1,0)X_2$ is shown in gray, and the support of $M_{\alpha_1+2\alpha_2}\tilde{N}(\X;0,1)X_1X_2^2$ is shown in black.

\begin{figure}[h!]
\includegraphics[scale=.55]{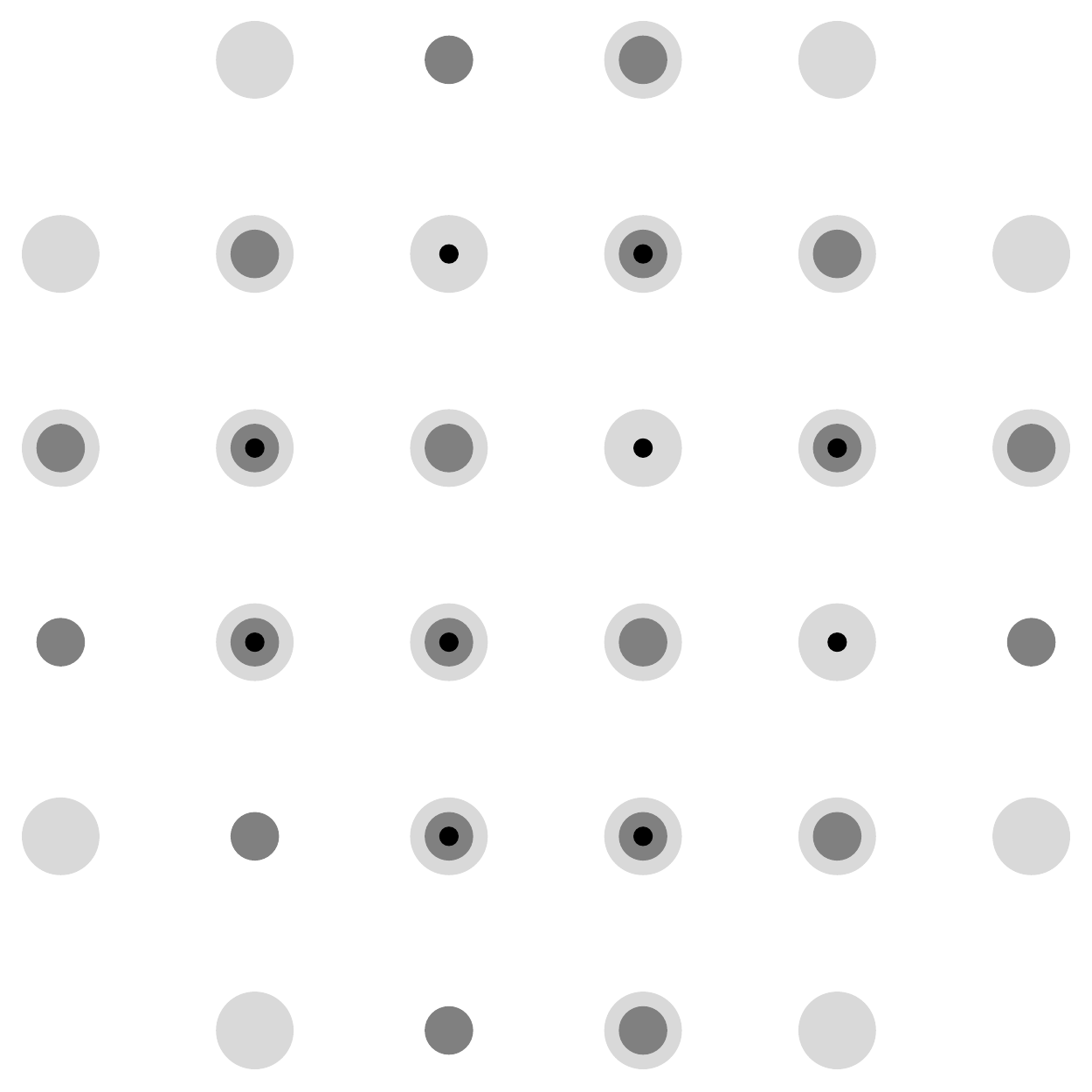}
\caption{Support of the numerator of $Z_{B_2}^\ast(\X;1,T^2+2)$, in terms of the $\tilde{N}(\X;\xi)$.}
\label{fig:globsupport2}
\end{figure}

\bibliographystyle{plain}
\bibliography{twistref}

\end{document}